\newcommand\T{\mathbf{T}}
\newcommand\U{\mathbf{U}}
\newcommand\D{\mathbf{D}}
\newcommand\f{\mathbf{f}}
\newcommand\g{\mathbf{g}}
\DeclareMathOperator{\id}{id}
\DeclareMathOperator{\Ind}{Ind}
\DeclareMathOperator{\Res}{Res}
\DeclareMathOperator{\sh}{shape}
\DeclareMathOperator{\std}{std}
\DeclareMathOperator{\RSK}{RSK}
\DeclareMathOperator{\Des}{Des}
\DeclareMathOperator{\ides}{ides}
\newcommand\calsh{\mathcal{S}}
\newcommand\calh{\mathcal{H}}
\newcommand\calhn{\mathcal{H}_n}
\newcommand\barcalh{\bar{\mathcal{H}}}
\newcommand\calb{\mathcal{B}}
\newcommand\calbn{\mathcal{B}_n}
\newcommand\hatcalb{\check{\mathcal{B}}}
\newcommand\calbdual{\mathcal{B}^*}
\newcommand\barcalb{\bar{\mathcal{B}}}
\newcommand\barcalbn{\bar{\mathcal{B}}_n}
\newcommand\hatk{\check{K}}
\newcommand\barg{\bar G}
\newcommand\barv{\bar V}
\newcommand\bark{\bar K}
\newcommand\hatbark{\check{\bar{K}}}
\newcommand\barx{\bar x}
\newcommand\bary{\bar y}
\newcommand\fsym{\mathbf{FSym}}
\newcommand\fqsym{\mathbf{FQSym}}
\newcommand\sn{\mathfrak{S}_n}
\newcommand\sd{\mathfrak{S}_d}
\newcommand\snminusone{\mathfrak{S}_{n-1}}
\newcommand\snminusk{\mathfrak{S}_{n-k}}
\newcommand\si{\mathfrak{S}_i}
\newcommand\sj{\mathfrak{S}_j}
\date{}
\theoremstyle{plain}
\newtheorem{thm}{\protect\theoremname}[section]
  \theoremstyle{definition}
  \newtheorem{defn}[thm]{\protect\definitionname}
 \newcommand\thmsname{\protect\theoremname}
 \newcommand\nm@thmtype{theorem}
 \theoremstyle{plain}
  \theoremstyle{plain}
  \theoremstyle{plain}
  \theoremstyle{plain}
  \theoremstyle{remark}
  \newtheorem*{rem*}{\protect\remarkname}
  \theoremstyle{definition}
  \newtheorem{example}[thm]{\protect\examplename}
 \theoremstyle{remark}
 \newtheorem*{rems*}{\protect\remarksname}
  \providecommand{\remarksname}{Remarks}
\newlength\cellsize \setlength\cellsize{12\unitlength}
\newcommand\cellify[1]{\def\thearg{#1}\def\nothing{}%
\ifx\thearg\nothing
\vrule width0pt height\cellsize depth0pt\else
\hbox to 0pt{\usebox2\hss}\fi%
\vbox to 12\unitlength{
\vss
\hbox to 12\unitlength{\hss$#1$\hss}
\vss}}
\newcommand\tableau[1]{\vtop{\let\\=\cr
\setlength\baselineskip{-12000pt}
\setlength\lineskiplimit{12000pt}
\setlength\lineskip{0pt}
\halign{&\cellify{##}\cr#1\crcr}}}
\newcommand{\e}{\mbox{}}
  \providecommand{\definitionname}{Definition}
  \providecommand{\examplename}{Example}
  \providecommand{\remarkname}{Remark}
\providecommand{\theoremname}{Theorem}
\begin{document}

\title{Lumpings of Algebraic Markov Chains arise from Subquotients}

\author{C. Y. Amy Pang}

\maketitle

\begin{abstract}
A function on the state space of a Markov chain is a ``lumping''
if observing only the function values gives a Markov chain. We give
very general conditions for lumpings of a large class of algebraically-defined
Markov chains, which include random walks on groups and other common
constructions. We specialise these criteria to the case of descent
operator chains from combinatorial Hopf algebras, and, as an example,
construct a ``top-to-random-with-standardisation'' chain on permutations
that lumps to a popular restriction-then-induction chain on partitions,
using the fact that the algebra of symmetric functions is a subquotient
of the Malvenuto-Reutenauer algebra.
\end{abstract}

\section{Introduction\label{sec:Introduction}}

Combinatorialists have built a variety of frameworks for studying
Markov chains algebraically, most notably the theories of random walks
on groups \cite{randomwalksongroups,randomwalksongroupspersibook}
and their extensions to monoids \cite{hyperplanewalk,lrb,rtrivialmonoids}.
Further examples include \cite{jasondownup,iwahoriheckealgmetrowalk,descentoperatorchains}.
These approaches usually associate some algebraic operator to the
Markov chain, the advantage being that the eigendata of the operator
reflects the convergence rates of the chains. As an easy example,
consider $n$ playing cards laid in a row on a table, and imagine
exchanging two randomly chosen cards at each time step. \cite{randomtranspositions}
represents this random transposition shuffle as multiplication by
the sum of transpositions in the group algebra of the symmetric group,
and deduces from the representation theory of the symmetric group
that asymptotically $\frac{1}{2}n\log n$ steps are required to randomise
the order of the cards. See Example \ref{ex:cardshuffleIA} below
for more details of the setup.

Analogous to how these frameworks translated the convergence rate
calculation into an algebraic question of representations and characters,
the present paper gives very general algebraic conditions for a different
probability problem: when is a function $\theta$ of an algebraic
Markov chain $\{X_{t}\}$ a Markov function, meaning that the sequence
of random variables $\{\theta(X_{t})\}$ is itself a Markov chain?
The motivation for this is that often, only certain functions of Markov
chains are of interest. For example, the random transposition shuffle
above may be in preparation for a card game that only uses the cards
on the left (Example \ref{ex:cardshuffleIB_topcards}), or where only
the position of one specific card is important (Example \ref{ex:cardshuffleIB_aceofspades}).
One naturally suspects that randomising only half the cards or only
the position of one card may take fewer than $\frac{1}{2}n\log n$
moves, as these functions can become randomised before the full chain
does. The convergence rates of such functions of Markov chains are
generally easier to analyse when the function is Markov, as the \emph{lumped
chain} $\{\theta(X_{t})\}$ can be studied independently of the full
chain $\{X_{t}\}$. The reverse problem is also interesting: a Markov
chain $X'_{t}$ that is hard to analyse directly may benefit from
being viewed as $\{\theta(X_{t})\}$ for a more tractable ``lift''
chain $\{X_{t}\}$. \cite{paseplift,paseplift2,bernoullilaplace}
are examples of this idea.

The aim of this paper is to expedite the search for lumpings and lifts
of ``algebraic'' Markov chains by giving very general conditions
for their existence. As formalised in Section I.A, the chains under
consideration are associated to a linear transformation $T:V\rightarrow V$,
where the state space is a basis $\calb$ of the vector space $V$.
Our two main discoveries for such chains are:
\begin{itemize}
\item (Section \ref{sec:tableaux}/I.B, Theorem \ref{thm:stronglumping-linearmap})
if $T$ descends to a well-defined map $\bar{T}$ on a quotient space
$\bar{V}$ of $V$ that ``respects the basis'' $\calb$, then the
quotient projection $\theta:V\rightarrow\bar{V}$ gives a lumping
from any initial distribution on $\calb$. The lumped chain is associated
to $\bar{T}:\barv\rightarrow\barv$.
\item (Section \ref{sec:permutations}/I.C, Theorem \ref{thm:weaklumping-linearmap})
if $V$ contains a $T$-invariant subspace $V'$ that ``respects
the basis'' $\calb$, then $T:V'\rightarrow V'$ corresponds to a
lumping that is only valid for certain initial distributions, i.e.
a \emph{weak lumping}.
\end{itemize}
Part I/Section 2%
\footnote{(The sections have both custom numbering and standard numerical numbering,
to be consistent with the journal version.)%
} states and proves the above very general theorems, and illustrates
them with numerous simple examples, both classical and new.

Part II/Section 3 specialises these general lumping criteria to descent
operator chains on combinatorial Hopf algebras \cite{descentoperatorchains}
- in essence, lumpings from any initial distribution correspond to
quotient algebras, and weak lumpings to subalgebras. This is applied
to two fairly elaborate examples. Sections II.A-II.D demystifies a
theorem of Jason Fulman \cite[Th. 3.1]{jasonlift}: the probability
distribution of the RSK shape \cite[Sec. 7.11]{stanleyec2}\cite[Sec. 4]{fultonyoungtableaux}
of a permutation, after $t$ top-to-random shuffles (Example \ref{ex:cardshuffleIA}
below) from the identity, agrees with the probability distribution
of a partition after $t$ steps of a certain Markov chain that removes
then readds a random box (see Section \ref{sub:partitions}/the second
half of Section II.A). Fulman remarked that the connection between
these two chains was ``surprising'', perhaps because it is not a
lumping (see the start of Section 3/Part II). Here we use the new
lumping criteria for descent operator chains to construct a similar
chain to top-to-random shuffling that does lump to the chain on partitions,
and prove that its probability distribution after $t$ steps from
the identity agrees with that of top-to-random.

The second application, in Section \ref{sub:descentsetlump}/II.E,
is a Hopf-algebraic re-proof of a result of Christos Athanasiadis
and Persi Diaconis \cite[Ex. 5.8]{hyperplanelump}, that riffle-shuffles
and related card shuffles lump via descent set.

\subsection*{Acknowledgements}

I would like to thank Nathan Williams for a question that motivated
this research, and Persi Diaconis, Jason Fulman and Franco Saliola
for numerous helpful conversations, and Federico Ardila, Gr\'{e}gory
Ch\^{a}tel, Mathieu Guay-Paquet, Simon Rubenstein-Salzedo, Yannic
Vargas and Graham White for useful comments. SAGE computer software
\cite{sage} was very useful, especially the combinatorial Hopf algebras
coded by Aaron Lauve and Franco Saliola.

\section{Part I: General Theory}

\subsection{Matrix notation}

Given a matrix $A$ , let $A(x,y)$ denote its entry in row $x$,
column $y$, and write $A^{T}$ for the transpose of $A$.

Let $V$ be a vector space (over $\mathbb{R}$) with basis $\calb$,
and $\T:V\rightarrow V$ be a linear map.  Write $\left[\T\right]_{\calb}$
for the matrix of $\T$ with respect to $\calb$ . In other words,
the entries of $\left[\T\right]_{\calb}$ satisfy
\[
\T(x)=\sum_{y\in\calb}\left[\T\right]_{\calb}(y,x)y
\]
for each $x\in\calb$.

$V^{*}$ is the \emph{dual vector space} to $V$, the set of linear
functions from $V$ to $\mathbb{R}$. Its natural basis is $\calbdual:=\left\{ x^{*}|x\in\calb\right\} $,
where $x^{*}$ satisfies $x^{*}(x)=1$, $x^{*}(y)=0$ for all $y\in\calb$,
$y\neq x$. The \emph{dual map} to $\T:V\rightarrow V$ is the linear
map $\T^{*}:V^{*}\rightarrow V^{*}$ satisfying $(\T^{*}f)(v)=f(\T v)$
for all $v\in V,f\in V^{*}$. Dualising a linear map is equivalent
to transposing its matrix: $\left[\T^{*}\right]_{\calbdual}=\left[\T\right]_{\calb}^{T}$.

\subsection{I.A: Markov Chains from Linear Maps via the Doob Transform\label{sec:partitions}}

To start, here is a quick summary of the Markov chain facts required
for this work. A (discrete time) Markov chain is a sequence of random
variables $\{X_{t}\}$, where each $X_{t}$ belongs to the \emph{state
space} $\Omega$. All Markov chains here are time-independent and
have a finite state space. Hence they are each described by an $|\Omega|$-by-$|\Omega|$
\emph{transition matrix} $K$: for any time $t$,
\[
P\{X_{t}=x_{t}|X_{0}=x_{0},X_{1}=x_{1},\dots,X_{t-1}=x_{t-1}\}=P\{X_{t}=x_{t}|X_{t-1}=x_{t-1}\}:=K(x_{t-1},x_{t}).
\]
(Here, $P\{X|Y\}$ is the probability of event $X$ given event $Y$.)
If the probability distribution of $X_{t}$ is expressed as a row
vector $g_{t}$, then taking one step of the chain is equivalent to
multiplication by $K$ on the right: $g_{t}=g_{t-1}K$. (Some authors,
notably \cite{rtrivialmonoids}, take the opposite convention, where
$P\{X_{t}=y|X_{t-1}=x\}:=K(y,x)$, and the distribution of $X_{t}$
is represented by a column vector $f_{t}$ with $f_{t}=Kf_{t-1}$.)
Note that a matrix $K$ specifies a Markov chain in this manner if
and only if $K(x,y)\geq0$ for all $x,y\in\Omega$, and $\sum_{y\in\Omega}K(x,y)=1$
for each $x\in\Omega$. A probability distribution $\pi:\Omega\rightarrow\mathbb{R}$
is a\emph{ stationary distribution} if it satisfies $\sum_{x\in\Omega}\pi(x)K(x,y)=\pi(y)$
for each state $y\in\Omega$. We refer the reader to the textbooks
\cite{markovmixing,lumping} for more background in Markov chain theory.

This paper concerns chains which arise from linear maps. A simple
motivating example is a random walk on a group. Given a probability
distribution $Q$ on a group $G$ (i.e. a function $Q:G\rightarrow\mathbb{R}$),
consider the following Markov chain on the state space $\Omega=G$:
at each time step, choose a group element $g$ with probability $Q(g)$,
and move from the current state $x$ to the state $xg$. This chain
is associated to the ``right multiplication by $\sum_{g\in G}Q(g)g$''
operator on the group algebra $\mathbb{R}G$, i.e. to the linear transformation
$\T:\mathbb{R}G\rightarrow\mathbb{R}G$, $\T(x):=x\left(\sum_{g\in G}Q(g)g\right)$.
To state the relationship more precisely: the transition matrix of
the Markov chain is the transpose of the matrix of $\T$ relative
to the basis $G$ of $\mathbb{R}G$, i.e. $K=[\T]_{G}^{T}$. (One
can define similar chains from left-multiplication operators.)

Before generalising this connection to other linear operators, here
are some simple examples of random walks on groups that we will use
to illustrate lumpings in later sections.
\begin{example}[Card-shuffling]
 \label{ex:cardshuffleIA} Random walks on $G=\sn$, the symmetric
group, describe many examples of card-shuffling. The state space of
these chains are the $n!$ possible orderings of a deck of $n$ cards.
For convenience, suppose the cards are labelled $1,2,\dots,n$, each
label occurring once. There are various different conventions on how
to represent such an ordering by a permutation, see \cite[Sec. 2.2]{yufeizhao}.
We follow the more modern notation in \cite{riffleshufflerepeatedcards}
(as opposed to \cite{toptorandom,originalriffleshuffle}) and associate
$\sigma$ to the ordering where $\sigma(1)$ is the label of the top
card, $\sigma(2)$ is the label of the second card from the top, ...,
$\sigma(n)$ is the label of the bottom card. In other words, writing
$\sigma$ in one-line notation $(\sigma(1),\sigma(2),\dots,\sigma(n))$
(see Section \ref{sec:combinotation}/the start of Part II) lists
the card labels from top to bottom. Observe that in this convention,
right-multiplication by a permutation $\tau$ moves the card in position
$\tau(i)$ to position $i$.

Two simple shuffles that we will consider are:
\begin{itemize}
\item \emph{top-to-random} \cite{toptorandom,cppriffleshuffle}: remove
the top card, then reinsert it into the deck at one of the $n$ possible
positions, chosen uniformly. A possible trajectory for a deck of five
cards is

\noindent \begin{center}
\begin{tikzpicture} 
\node (A) at (0,0) {$(5,2,4,3,1)$}; 
\node (B) at (4,0)  {$(2,4,3,5,1)$}; 
\node (C) at (8,0)  {$(4,3,2,5,1)$};  
\node (D) at (12,0) {$(4,3,2,5,1)$}; 
\node (E) at (2,-2) {$(2,4,3,1)$}; 
\node (F) at (6,-2)  {$(4,3,5,1)$}; 
\node (G) at (10,-2)  {$(3,2,5,1)$}; 
\draw[->] (A) -- (E); 
\draw[->] (E) -- (B); 
\draw[->] (B) -- (F); 
\draw[->] (F) -- (C); 
\draw[->] (C) -- (G); 
\draw[->] (G) -- (D); 
\end{tikzpicture}
\par\end{center}

The associated distribution $Q$ on $\sn$ is 
\[
Q(g)=\begin{cases}
\frac{1}{n} & \mbox{ if }g=(i\ i-1\ \dots\ 1)\mbox{ for some }i\mbox{, in cycle notation};\\
0 & \mbox{otherwise.}
\end{cases}
\]
(Note that the identity permutation is the case $i=1$.) Equivalently,
the associated linear map is right-multiplication by 
\[
q=\frac{1}{n}\sum_{i=1}^{n}(i\ i-1\ \dots\ 1).
\]
This Markov chain has been thoroughly analysed over the literature:
\cite[Sec. 1, Sec. 2]{stronguniformtime} uses a strong uniform time
to elegantly show that roughly $n\log n$ iterations are required
to randomise the deck, and \cite[Cor. 2.1]{cppriffleshuffle} finds
the explicit probabilities of achieving a particular permutation after
any given number of shuffles. The time-reversal of the top-to-random
shuffle is the equally well-studied Tsetlin library \cite{tsetlin}:
\cite[Sec. 4.6]{oneminuse} describes an explicit algorithm for an
eigenbasis, \cite{phatarfod} derives the spectrum for a weighted
version, and \cite{tsetlinlibrary} lists many more references. 

\item \emph{random-transposition} \cite[Chap. 3D]{randomtranspositions,randomwalksongroupspersibook}:
choose two cards, possibly with repetition, uniformly and independently.
If the same card was chosen twice, do nothing. Otherwise, exchange
the two chosen cards. A possible trajectory for a deck of five cards
is

\noindent \begin{center}
\begin{tikzpicture} 
\node (A) at (0,0) {$(5,2,4,3,1)$}; 
\node (B) at (4,0)  {$(5,4,2,3,1)$}; 
\node (C) at (8,0)  {$(5,4,1,3,2)$};  
\node (D) at (12,0) {$(4,5,1,3,2)$}; 
\draw[->] (A) -- (B); 
\draw[->] (B) -- (C); 
\draw[->] (C) -- (D); 
\end{tikzpicture}
\par\end{center}

The associated distribution $Q$ on $\sn$ is 
\[
Q(g)=\begin{cases}
\frac{1}{n} & \mbox{ if }g\mbox{ is the identity};\\
\frac{2}{n^{2}} & \mbox{ if }g\mbox{ is a transposition;}\\
0 & \mbox{otherwise.}
\end{cases}
\]
Equivalently, the associated linear map is right-multiplication by
\[
q=\frac{1}{n}\id+\frac{2}{n}\sum\sigma;
\]
summing over all transpositions $\sigma$.

The mixing time for the random-transposition shuffle is $\frac{1}{2}n\log n$,
as shown in \cite{randomtranspositions} using the representation
theory of the symmetric group. \cite{randomwalkonmatchings} uses
this chain to induce Markov chains on trees and on matchings. A recent
extension to random-involutions is \cite{randominvolutions}.

\end{itemize}
\end{example}

\begin{example}[Flip a random bit]
 \label{ex:dbitbinaryIA} \cite{polytopewalk}: Let $G=\left(\mathbb{Z}/2\mathbb{Z}\right)^{d}$,
written additively as binary strings of length $d$. At each time
step, uniformly choose one of the $d$ bits, and change it either
from 0 to 1 or from 1 to 0. A possible trajectory for $d=5$ is

\noindent \begin{center}
\begin{tikzpicture} 
\node (A) at (0,0) {$(1,0,0,1,1)$}; 
\node (B) at (4,0)  {$(1,0,1,1,1)$}; 
\node (C) at (8,0)  {$(1,0,1,1,0)$};  
\node (D) at (12,0) {$(0,0,1,1,0)$}; 
\draw[->] (A) -- (B); 
\draw[->] (B) -- (C); 
\draw[->] (C) -- (D); 
\end{tikzpicture}
\par\end{center}

The associated distribution $Q$ is 
\[
Q(g)=\begin{cases}
\frac{1}{d} & \mbox{ if }g\mbox{ consists of }d-1\mbox{ zeroes and }1\mbox{ one};\\
0 & \mbox{otherwise.}
\end{cases}
\]
Equivalently, the associated linear map is right-multiplication by
\[
q=\frac{1}{d}\left((1,0,\dots,0)+(0,1,0,\dots,0)+\dots+(0,\dots,0,1)\right).
\]
(The addition in $q$ is in the group algebra $\mathbb{R}G$, not
within the group $G$.)

\cite{polytopewalk} investigated the return probabilities of this
walk and similar walks on $\left(\mathbb{Z}/2\mathbb{Z}\right)^{d}$
that allow changing more than one bit.

\end{example}
As detailed above, the transition matrix of a random walk on a group
is $K=[\T]_{G}^{T}$, where $\T$ is the right-multiplication operator
on the group algebra $\mathbb{R}G$ defined by $\T(x):=x\left(\sum_{g\in G}Q(g)g\right)$.
This relationship between $K$ and $\T$ allows the representation
theory of $G$ to illuminate the converge rates of the chain. A naive
generalisation is to declare new transition matrices to be $K:=[\T]_{\calb}^{T}$,
for other linear transformations $\T$ on a vector space with basis
$\calb$. The state space of such a chain is the basis $\calb$, and
the intuition is that the transition probabilities $K(x,y)$ would
represent the chance of obtaining $y$ when applying $\T$ to $x$.

In order for $K:=[\T]_{\calb}^{T}$ to be a transition matrix, we
require $K(x,y)\geq0$ for all $x,y$ in $\calb$, and $\sum_{y\in\calb}K(x,y)=1$.
As noted by Persi Diaconis (personal communication), the non-negativity
condition can be achieved by adding multiples of the identity transformation
to $\T$, which essentially keeps the eigendata properties in Theorem
\ref{thm:doob-transform} below. In any case, many linear operators
arising from combinatorics already have non-negative coefficients
with respect to natural bases, so we do not dwell on this problem.

The row-sum condition $\sum_{y\in\calb}K(x,y)=1$, though true for
many important cases \cite{hyperplanewalk,rtrivialmonoids}, is less
guaranteed. There are many possible ways to adjust $K$ so that its
row sums become 1. One way which preserves the relationship between
the eigendata of $\T$ and the convergence rates of the chain is to
rescale $\T$ and the basis $\calb$ using the Doob $h$-transform.

The Doob $h$-transform is a very general tool in probability, used
to condition a process on some event in the future \cite{doobtransformoriginal}.
The simple case of relevance here is conditioning a (finite, discrete-time)
Markov chain on non-absorption. The Doob transform constructs the
transition matrix of the conditioned chain out of the transition probabilities
of the original chain between non-absorbing states, or, equivalently,
out of the original transition matrix with the rows and columns for
absorbing states removed. As observed in the multiple references below,
the same recipe essentially works for any arbitrary non-negative matrix
$K$. 

The Doob transform relies on a positive \emph{right eigenfunction}
$\eta$ of $K$, i.e. a positive function $\eta:\calb\rightarrow\mathbb{R}$
satisfying $\sum_{y}K(x,y)\eta(y)=\beta\eta(x)$ for some positive
number $\beta$, which is the eigenvalue. (Functions satisfying this
condition with $\beta=1$ are called \emph{harmonic}, hence the name
$h$-transform.) To say this in a basis-independent way, recall that
$K=[\T]_{\calb}^{T}=\left[\T^{*}\right]_{\calbdual}$, so $\eta$
(or more accurately, its linear extension in $V^{*}$) is an eigenvector
of the dual map $\T^{*}:V^{*}\rightarrow V^{*}$ with eigenvalue $\beta$,
i.e. $\eta\circ T=\beta\eta$ as functions on $V$.
\begin{thm}[Markov chains from linear maps via the Doob $h$-transform]
 \label{thm:doob-transform} \cite[Def. 8.11, 8.12]{doobtransformbook}\cite[Sec. 17.6.1]{markovmixing}\cite[Lem. 4.4.1]{zhou}\cite[Lem. 1.4, Lem. 2.11]{doobnotes}
Let $V$ be a finite-dimensional vector space with basis $\calb$,
and $\T:V\rightarrow V$ be a linear map for which $K:=\left[\T\right]_{\calb}^{T}$
has all entries non-negative. Suppose $K$ has a positive right eigenfunction
$\eta$ with eigenvalue $\beta>0$. Then
\begin{enumerate}[label=\roman*.]
\item The matrix
\[
\hatk(x,y):=\frac{1}{\beta}K(x,y)\frac{\eta(y)}{\eta(x)}
\]
is a transition matrix. Equivalently, $\hatk:=\left[\frac{\T}{\beta}\right]_{\hatcalb}^{T}$,
where $\hatcalb:=\left\{ \frac{x}{\eta(x)}:x\in\calb\right\} $.
\item The left eigenfunctions $\g$ for $\hatk$, with eigenvalue $\alpha$
(i.e. $\sum_{x}\g(x)K(x,y)=\alpha\g(y)$), are in bijection with the
eigenvectors $g\in V$ of $\T$, with eigenvalue $\frac{\alpha}{\beta}$,
via 
\[
\g(x):=\eta(x)\times\mbox{coefficient of }x\mbox{ in }g.
\]

\item The stationary distributions $\pi$ for $\hatk$ are precisely the
functions of the form 
\[
\pi(x):=\eta(x)\frac{\xi_{x}}{\sum_{x\in\calb}\xi_{x}\eta(x)},
\]
where $\sum_{x\in\calb}\xi_{x}x\in V$ is an eigenvector of $\T$
with eigenvalue 1, whose coefficients $\xi_{x}$ are all non-negative.
\item The right eigenfunctions $\f$ for $\hatk$, with eigenvalue $\alpha$
(i.e. $\sum_{y}K(x,y)\f(y)=\alpha\f(x)$), are in bijection with the
eigenvectors $f\in V^{*}$of the dual map $\T^{*}$, with eigenvalue
$\frac{\alpha}{\beta}$, via 
\[
\f(x):=\frac{1}{\eta(x)}f(x).
\]

\end{enumerate}
\end{thm}
The function $\eta:V\rightarrow\mathbb{R}$ above is called the \emph{rescaling
function}. The output $\hatk$ of the transform depends on the choice
of rescaling function. Observe that, if $K:=\left[\T\right]_{\calb}^{T}$
already has each row summing to 1, then the constant function $\eta\equiv1$
on $\calb$ is a positive right eigenfunction of $K$ with eigenvalue
1, and using this constant rescaling function results in no rescaling
at all: $\hatk=K$. This will be the case in all examples in Sections
\ref{sec:tableaux}/I.B and \ref{sec:permutations}/I.C, so the reader
may wish to skip the remainder of this section on first reading, and
assume $\eta\equiv1$ on $\calb$ in all theorems. (Note that $\eta\equiv1$
on $\calb$ does not mean $\eta$ is constant on $V$, since $\eta$
is a linear function. Instead, $\eta$ sends a vector in $V$ to the
sum of the coefficients when $v$ is expanded in the $\calb$ basis.) 
\begin{proof}
To prove i, first note that $\hatk(x,y)\ge0$ because $\beta>0$ and
$\eta(x)>0$ for all $x$. And the rows of $\hatk$ sum to 1 because
\[
\sum_{y}\hatk(x,y)=\frac{\sum_{y}K(x,y)\eta(y)}{\beta\eta(x)}=\frac{\beta\eta(x)}{\beta\eta(x)}=1.
\]

Parts ii and iv are immediate from the definition of $\hatk$. To
see part iii, recall that a stationary distribution is precisely a
positive left eigenfunction with eigenvalue 1 (normalised to be a
distribution).\end{proof}
\begin{example}
To illustrate the Doob transform, here is the down-up chain on partitions
of size 3. (See Section \ref{sub:partitions}/II.A for a general description,
and an interpretation in terms of restriction and induction of representations
of symmetric groups.)

Let $V_{3}$ be the vector space with basis $\calb_{3}$, consisting
of the three partitions of size 3:\begin{equation*} 
\calb_3 :=\left\{ \raisebox{1.8ex}{\tableau{ \e & \e & \e}, \, \tableau{ \e & \e \\ \e}, \, \tableau{ \e \\ \e \\ \e}} \right\}.
\end{equation*} We define below a second vector space $V_{2}$, and linear transformations
$\D:V_{3}\rightarrow V_{2}$, $\U:V_{2}\rightarrow V_{3}$ whose composition
$\T=\U\circ\D$ will define our Markov chain on $\calb_{3}$.

$V_{2}$ is the vector space with basis $\calb_{2}$, the two partitions
of size 2\begin{equation*} 
\calb_2 :=\left\{ \raisebox{0.6ex}{\tableau{ \e & \e}, \, \tableau{ \e \\ \e} } \right\}.
\end{equation*}  

For $x\in\calb_{3}$, define $\D(x)$ to be the sum of all elements
of $\calb_{2}$ which can be obtained from $x$ by deleting a box
on the right end of any row. So\begin{align*} 
\D \left( \raisebox{-0.6ex}{\tableau{ \e & \e & \e }}\right) &= \raisebox{-0.6ex}{ \tableau{ \e & \e} };\\
\D \left( \raisebox{0.6ex}{\tableau{ \e & \e \\ \e}}\right) &= \raisebox{0.6ex}{ \tableau{ \e & \e} + \tableau{ \e \\ \e}  };\\
\D \left( \raisebox{1.8ex}{\tableau{ \e \\ \e \\ \e}}\right) &= \raisebox{1.8ex}{\tableau{ \e \\ \e}  };
\end{align*}Then, for $x\in\calb_{2}$, define $\U(x)$ to be the sum of all elements
of $\calb_{3}$ which can be obtained from $x$ by adding a new box
on the right end of any row, including on the row below the last row
of $x$. So\begin{align*} 
\U \left( \raisebox{-0.6ex}{\tableau{ \e & \e }}\right) &= \raisebox{-0.6ex}{\tableau{ \e & \e & \e } + \tableau{ \e & \e \\ \e} };\\
\U \left( \raisebox{0.6ex}{\tableau{ \e \\ \e }}\right) &= \raisebox{1.8ex}{\tableau{ \e & \e \\ \e} + \tableau{ \e \\ \e \\ \e} };
\end{align*}

An easy calculation shows that 
\[
K=\left[\U\circ\D\right]_{\calb_{3}}^{T}=\begin{bmatrix}1 & 1 & 0\\
1 & 2 & 1\\
0 & 1 & 1
\end{bmatrix},
\]
which has all entries non-negative, but its rows do not sum to 1.

The function $\eta:\calb_{3}\rightarrow\mathbb{R}$ defined by  \begin{equation*}  \eta\left( \raisebox{-0.6ex}{\tableau{ \e & \e & \e}}\right)=1; \,  \eta\left( \raisebox{0.6ex}{\tableau{ \e & \e \\ \e}}\right)=2; \,  \eta \left( \raisebox{1.8ex}{\tableau{ \e \\ \e \\ \e}}\right)=1 \end{equation*} is
a right eigenfunction of $K$ with eigenvalue $\beta=3$. So applying
the Doob transform with this choice of rescaling function amounts
to dividing every entry of $K$ by 3, then dividing the middle row
by 2 and multiplying the middle column by 2, giving 
\[
\global\long\def\arraystretch{1.5}
\hatk=\begin{bmatrix}\frac{1}{3} & \frac{2}{3} & 0\\
\frac{1}{6} & \frac{2}{3} & \frac{1}{6}\\
0 & \frac{2}{3} & \frac{1}{3}
\end{bmatrix}.
\]
This is a transition matrix as its rows sum to 1. Observe that $\hatk=\left[\frac{1}{3}\U\circ\D\right]_{\hatcalb_{3}}^{T}$,
where \begin{equation*} 
\hatcalb_3 :=\left\{ \raisebox{1.8ex}{\tableau{ \e & \e & \e}, \, $\displaystyle \frac{1}{2}$ \tableau{ \e & \e \\ \e}, \, \tableau{ \e \\ \e \\ \e}} \right\}.
\end{equation*} 

\textcolor{green}{}
\end{example}

\subsection{I.B: Quotient Spaces and Strong Lumping\label{sec:tableaux}}

As remarked in the introduction, sometimes only certain features of
a Markov chain is of interest - that is, we wish to study a process
$\{\theta(X_{t})\}$ rather than $\{X_{t}\}$, for some function $\theta$
on the state space. The process $\{\theta(X_{t})\}$ is called a lumping
(or projection), because it groups together states with the same image
under $\theta$, treating them as a single state. The analysis of
a lumping is easiest when $\{\theta(X_{t})\}$ is itself a Markov
chain. If this is true regardless of the initial state of the full
chain $\{X_{t}\}$, then the lumping is strong; if it is dependent
on the initial state, the lumping is weak. \cite[Sec. 6.3, 6.4]{lumping}
is a very thorough exposition on these topics.

This section focuses on strong lumping; the next section will handle
weak lumping.
\begin{defn}[Strong lumping]
 \label{def:stronglumping}Let $\{X_{t}\},\{\bar{X}_{t}\}$ be Markov
chains on state spaces $\Omega,\bar{\Omega}$ respectively, with transition
matrices $K,\bark.$ Then $\{\bar{X}_{t}\}$ is a \emph{strong lumping
of $\{X_{t}\}$ via $\theta$} if there is a surjection $\theta:\Omega\rightarrow\bar{\Omega}$
such that the process $\{\theta(X_{t})\}$ is a Markov chain with
transition matrix $\bark$, irrespective of the starting distribution
$X_{0}$. In this case, $\{X_{t}\}$ is a \emph{strong lift of $\{\bar{X}_{t}\}$
via $\theta$}.
\end{defn}
The following necessary and sufficient condition for strong lumping
is known as Dynkin's criterion: 
\begin{thm}[Strong lumping for Markov chains]
 \label{thm:stronglumping-general} \cite[Th. 6.3.2]{lumping} Let
$\{X_{t}\}$ be a Markov chain on a state space $\Omega$ with transition
matrix $K$, and let $\theta:\Omega\rightarrow\bar{\Omega}$ be a
surjection. Then $\{X_{t}\}$ has a strong lumping via $\theta$ if
and only if, for every $x_{1},x_{2}\in\Omega$ with $\theta(x_{1})=\theta(x_{2})$,
and every $\bar{y}\in\bar{\Omega}$, the transition probability sums
satisfy 
\[
\sum_{y:\theta(y)=\bary}K(x_{1},y)=\sum_{y:\theta(y)=\bary}K(x_{2},y).
\]
The lumped chain has transition matrix 
\[
\bark(\bar{x},\bar{y}):=\sum_{y:\theta(y)=\bary}K(x,y)
\]
 for any $x$ with $\theta(x)=\bar{x}$. \qed
\end{thm}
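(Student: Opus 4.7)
The plan is to prove both directions of the equivalence, with the formula for $\bark$ emerging as part of the sufficiency argument.

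For the necessity direction, I would begin with the observation that if $\{\theta(X_t)\}$ is a Markov chain with transition matrix $\bark$ regardless of initial distribution, then in particular I may take $X_0$ to be concentrated at some $x \in \Omega$ with $\theta(x) = \bar x$. Then $\theta(X_0) = \bar x$ with probability $1$, and the one-step transition probability of the lumped chain gives
\[
\bark(\bar x, \bar y) = P\{\theta(X_1) = \bar y \mid \theta(X_0) = \bar x\} = P\{\theta(X_1) = \bar y \mid X_0 = x\} = \sum_{y: \theta(y) = \bar y} K(x,y).
\]
Since this value depends only on $\bar x = \theta(x)$ and not on the particular preimage $x$, the two sums in the theorem statement agree whenever $\theta(x_1) = \theta(x_2)$, and the explicit formula for $\bark$ falls out simultaneously.

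For the sufficiency direction, I would define $\bark(\bar x, \bar y) := \sum_{y: \theta(y) = \bar y} K(x, y)$ for any representative $x$ of $\bar x$, which is well-defined by hypothesis. I then need to check that for every initial distribution and every sequence $\bar y_0, \dots, \bar y_t, \bar y_{t+1}$ in $\bar\Omega$ with $P\{\theta(X_0)=\bar y_0, \dots, \theta(X_t)=\bar y_t\} > 0$,
\[
P\{\theta(X_{t+1}) = \bar y_{t+1} \mid \theta(X_0)=\bar y_0, \dots, \theta(X_t)=\bar y_t\} = \bark(\bar y_t, \bar y_{t+1}).
\]
The computation expands each event $\{\theta(X_i) = \bar y_i\}$ as a disjoint union over preimages $x_i \in \theta^{-1}(\bar y_i)$ and writes the joint probability as a sum of products $P\{X_0 = x_0\}\prod_i K(x_i, x_{i+1})$. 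In the numerator, the innermost sum $\sum_{x_{t+1} \in \theta^{-1}(\bar y_{t+1})} K(x_t, x_{t+1})$ equals $\bark(\bar y_t, \bar y_{t+1})$ by hypothesis and is independent of the particular $x_t \in \theta^{-1}(\bar y_t)$ chosen; I can therefore factor it out of the remaining sums, leaving exactly the denominator. The ratio is $\bark(\bar y_t, \bar y_{t+1})$, as desired.

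The main obstacle is simply keeping the conditional probability bookkeeping clean in the sufficiency direction: one must confirm that $\bark(\bar y_t, \bar y_{t+1})$ can be pulled outside the entire multiple sum over past trajectories, and this rests squarely on the hypothesis that the inner sum depends only on $\bar y_t$, not on $x_t$. Once this factorisation is in place, the same argument simultaneously verifies the Markov property for $\{\theta(X_t)\}$ and identifies its transition matrix as $\bark$, completing both assertions of the theorem.
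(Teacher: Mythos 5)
Your proof is correct: the necessity direction correctly exploits the freedom to start the chain at a point mass on any preimage of $\bar{x}$, and the sufficiency direction correctly factors the sum $\sum_{x_{t+1}\in\theta^{-1}(\bar{y}_{t+1})}K(x_t,x_{t+1})=\bar{K}(\bar{y}_t,\bar{y}_{t+1})$ out of the trajectory sum precisely because it is independent of the representative $x_t$. The paper itself gives no proof of this statement (it is quoted from Kemeny and Snell with a \verb|\qed|), and your argument is the standard proof of Dynkin's criterion, so there is nothing to reconcile.
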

When the chain $\{X_{t}\}$ arises from linear operators via the Doob
$h$-transform, Dynkin's criterion translates into the statement below
regarding quotient operators.
\begin{thm}[Strong lumping for Markov chains from linear maps]
 \label{thm:stronglumping-linearmap} \cite[Th. 3.4.1]{mythesis}
Let $V$ be a vector space with basis $\calb$, and $\T:V\rightarrow V,\eta:V\rightarrow\mathbb{R}$
be linear maps allowing the Doob transform Markov chain construction
of Theorem \ref{thm:doob-transform}. Let $\barv$ be a quotient space
of $V$, and denote the quotient map by $\theta:V\rightarrow\barv$.
Suppose that 
\begin{enumerate}
\item the distinct elements of $\{\theta(x):x\in\calb\}$ are linearly independent,
and
\item $\T,\eta$ descend to maps on $\barv$ - that is, there exists $\bar{\T}:\barv\rightarrow\barv$,
$\bar{\eta}:\barv\rightarrow\mathbb{R}$, such that $\theta\circ\T=\bar{\T}\circ\theta$
and $\bar{\eta}\circ\theta=\eta$. 
\end{enumerate}

Then the Markov chain defined by $\bar{\T}$ (on the basis $\barcalb:=\{\theta(x):x\in\calb\}$,
with rescaling function $\bar{\eta}$) is a strong lumping via $\theta$
of the Markov chain defined by $\T$. 

\end{thm}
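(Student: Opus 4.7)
The plan is to reduce the statement to Dynkin's criterion (Theorem \ref{thm:stronglumping-general}) by computing the column sums $\sum_{y:\theta(y)=\bar{y}}\hatk(x,y)$ explicitly, using the compatibility hypotheses $\theta\T=\bar{\T}\theta$ and $\bar\eta\theta=\eta$.

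First, I would verify that the data $(\bar\T,\barcalb,\bar\eta)$ actually satisfies the hypotheses of Theorem \ref{thm:doob-transform}, so that the chain on $\barv$ is well-defined. The relation $\theta\T=\bar\T\theta$ applied to $x\in\calb$ gives
\[
\sum_{\bar y\in\barcalb}[\bar\T]_{\barcalb}(\bar y,\bar x)\,\bar y \;=\; \bar\T\theta(x) \;=\; \theta\T(x) \;=\; \sum_{\bar y\in\barcalb}\Bigl(\sum_{y\in\calb:\,\theta(y)=\bar y}[\T]_{\calb}(y,x)\Bigr)\bar y,
\]
where $\bar x:=\theta(x)$. The hypothesis that the distinct elements of $\{\theta(x):x\in\calb\}$ are linearly independent means $\barcalb$ is an honest basis of its span, so matching coefficients gives the key identity
\[
[\bar\T]_{\barcalb}(\bar y,\bar x) \;=\; \sum_{y\in\calb:\,\theta(y)=\bar y}[\T]_{\calb}(y,x),
\]
which is independent of the choice of preimage $x$ of $\bar x$. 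This already shows that $[\bar\T]_{\barcalb}^{T}$ is non-negative (being a sum of non-negative entries), and positivity of $\bar\eta$ on $\barcalb$ follows from $\bar\eta(\theta(x))=\eta(x)>0$. The eigenvector condition $\bar\T^{*}\bar\eta=\bar\eta$ is then a quick verification: summing the identity above against $\bar\eta(\bar y)$ and using $\bar\eta(\bar y)=\eta(y)$ for every $y$ in the fiber collapses the double sum to $\sum_{y\in\calb}[\T]_{\calb}(y,x)\eta(y)=\eta(x)=\bar\eta(\bar x)$.

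Next, I would translate the boxed identity into a statement about transition probabilities. Multiplying both sides by $\eta(y)/\eta(x)=\bar\eta(\bar y)/\bar\eta(\bar x)$ yields
\[
\bar{\hatk}(\bar x,\bar y) \;=\; [\bar\T]_{\barcalb}(\bar y,\bar x)\frac{\bar\eta(\bar y)}{\bar\eta(\bar x)} \;=\; \sum_{y:\,\theta(y)=\bar y}[\T]_{\calb}(y,x)\frac{\eta(y)}{\eta(x)} \;=\; \sum_{y:\,\theta(y)=\bar y}\hatk(x,y).
\]
Since the right-hand side depends only on $\bar x=\theta(x)$, Dynkin's criterion (Theorem \ref{thm:stronglumping-general}) is satisfied, and the formula for the lumped transition matrix in that theorem matches $\bar{\hatk}$ exactly. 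This proves that $\{\theta(X_{t})\}$ is a Markov chain with transition matrix $\bar{\hatk}$ for every initial distribution on $\Omega$.

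The main conceptual point, and the one place where care is required, is the hypothesis of linear independence of $\{\theta(x):x\in\calb\}$ without multiplicity: without it, matching coefficients in the first display would be impossible and the column sums need not be independent of the preimage chosen. Everything else is bookkeeping. No combinatorial or probabilistic obstacle arises; the proof is essentially a compatibility check between the quotient map, the operator, and the harmonic function, and it fits on half a page.
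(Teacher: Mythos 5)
Your proposal is correct and follows essentially the same route as the paper: reduce to Dynkin's criterion, extract the identity $[\bar\T]_{\barcalb}(\bar y,\bar x)=\sum_{y:\theta(y)=\bar y}[\T]_{\calb}(y,x)$ by expanding $\bar\T\theta(x)=\theta\T(x)$ in the basis $\barcalb$ (using the linear independence hypothesis), and rescale by $\eta(y)/\eta(x)=\bar\eta(\bar y)/\bar\eta(\bar x)$ to convert it into the required statement about $\hatk$. Your explicit verification that $(\bar\T,\barcalb,\bar\eta)$ satisfies the Doob transform hypotheses is a slightly fuller account than the paper's parenthetical remark, but the argument is the same.
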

In the simplified case where $\eta\equiv1$ on $\calb$ and $\beta=1$
(so no rescaling is required to define the chain on $\calb$), such
as for random walks on groups, taking $\bar{\eta}\equiv1$ on $\barcalb$
satisfies $\bar{\eta}\circ\theta=\eta$, so condition 2 reduces to
a condition on $\T$ only, and the lumped chain also does not require
rescaling.

In the general case, the idea of the proof is that $\theta\circ\T=\bar{\T}\circ\theta$
is essentially equivalent to Dynkin's criterion for the unscaled matrices
$K:=[\T]_{\calb}^{T}$ and $\bark:=[\bar{\T}]_{\barcalb}^{T}$, and
this turns out to imply Dynkin's criterion for the Doob-transformed
transition matrices. 
\begin{proof}
Let $K=[\T]_{\calb}^{T}$, $\bark=[\bar{\T}]_{\barcalb}^{T}$, and
let $\beta$ be the eigenvalue of $\eta$. The first step is to show
that $\bar{\eta}$ is a possible rescaling function for $\bar{\T}$,
i.e. $\bar{\eta}$ is an eigenvector of $\bar{\T}^{*}$ with eigenvalue
$\beta$, taking positive values on $\barcalb$. In other words, the
requirement is that $[\bar{\T}^{*}(\bar{\eta})]v=\beta\bar{\eta}v$
for every $v\in\barv$, and $\bar{\eta}(v)>0$ if $v\in\barcalb$.
Since $\theta:V\rightarrow\barv$ and its restriction $\theta:\calb\rightarrow\barcalb$
are both surjective, it suffices to verify the above two conditions
for $v=\theta(x)$ with $x\in V$ and with $x\in\calb$ respectively. 

Now 
\[
[\bar{\T}^{*}(\bar{\eta})](\theta x)=\bar{\eta}\circ\bar{\T}(\theta x)=\bar{\eta}\circ\theta\circ\T(x)=\eta\circ\T(x)=\T^{*}\eta(x)=\beta\eta(x)=[\beta\bar{\eta}]\theta(x).
\]
And, for $x\in\calb$, we have $\bar{\eta}(\theta(x))=\eta(x)>0$.

Now let $\hatk,\hatbark$ denote the transition matrices that the
Doob transform constructs from $K$ and $\bark$. (Strictly speaking,
we do not yet know that the entries of $\bark$ are non-negative -
this will be proved in Equation (\ref{eq:stronglump}) below - but
the formula in the definition of the Doob transform remains well-defined
nevertheless.) By Theorem \ref{thm:stronglumping-general} above,
it suffices to show that, for any $x\in\calb$ with $\theta(x)=\barx$,
and any $\bary\in\barcalb$, 
\[
\hatbark(\bar{x},\bar{y})=\sum_{y:\theta(y)=\bary}\hatk(x,y).
\]
By definition of the Doob transform, this is equivalent to 
\[
\frac{1}{\beta}\bark(\bar{x},\bar{y})\frac{\bar{\eta}(\bary)}{\bar{\eta}(\barx)}=\frac{1}{\beta}\sum_{y:\theta(y)=\bary}K(x,y)\frac{\eta(y)}{\eta(x)}.
\]
Because $\bar{\eta}\theta=\eta$, the desired equality reduces to
\begin{equation}
\bark(\bar{x},\bar{y})=\sum_{y:\theta(y)=\bary}K(x,y).\label{eq:stronglump}
\end{equation}

Now expand both sides of $\bar{\T}\circ\theta(x)=\theta\circ\T(x)$
in the $\barcalb$ basis:
\[
\sum_{\bary\in\barcalb}\bark(\bar{x},\bar{y})\bary=\theta\left(\sum_{y\in\calb}K(x,y)y\right)=\sum_{\bary\in\barcalb}\left(\sum_{y:\theta(y)=\bary}K(x,y)\right)\bary.
\]
Equating coefficients of $\bary$ on both sides completes the proof.\end{proof}
\begin{example}[Forget the last bit under ``flip a random bit'']
 \label{ex:dbitbinaryIB} Take $G=\left(\mathbb{Z}/2\mathbb{Z}\right)^{d}$,
the additive group of binary strings of length $d$, as in Example
\ref{ex:dbitbinaryIA}. Then $\barg=\left(\mathbb{Z}/2\mathbb{Z}\right)^{d-1}$
is a quotient group of $G$, by forgetting the last bit. The quotient
map $\theta:G\rightarrow\barg$ induces a surjective map $\theta:\mathbb{R}G\rightarrow\mathbb{R}\barg$.

Recall that the ``flip a random bit'' chains comes from the linear
transformation on $\mathbb{R}G$ of right-multiplication by $q=\frac{1}{d}\left((1,0,\dots,0)+(0,1,0,\dots,0)+\dots+(0,\dots,0,1)\right)$.
Since multiplication of group elements descends to quotient groups,
forgetting the last bit is a lumping, and the lumped chain is associated
to right-multiplication in $\mathbb{R}\barg$ by the image of $q$
in $\mathbb{R}\barg$, which is $\frac{1}{d}\left((1,0,\dots,0)+(0,1,0,\dots,0)+\dots+(0,\dots,0,1)+(0,\dots,0)\right)$,
where there are $d$ summands each of length $d-1$.

The analogous construction holds for any quotient $\barg$ of any
group $G$; see \cite[App. IA]{riffleshufflerepeatedcards}.
\end{example}
The above principle extends to ``quotient sets'', i.e. a set of
coset representatives, which need not be groups (and is extended further
to double-coset representatives in \cite{bernoullilaplace}):
\begin{example}[``Follow the ace of spaces'' under shuffling]
 \label{ex:cardshuffleIB_aceofspades} Consider $G=\sn$, and let
$H=\snminusone$ be the subgroup of $\sn$ which permutes the last
$n-1$ objects. Then the transpositions $\tau_{i}:=(1\ i)$, for $2\leq i\leq n$,
together with $\tau_{1}:=\id$, give a set of right coset representatives
of $H$. The coset $H\tau_{i}$ consists of all deck orderings where
the card with label 1 is in the $i$th position from the top. Recall
that right-multiplication is always well defined on the set of right
cosets, so any card-shuffling model lumps by taking right cosets.
This corresponds to tracking only the location of the card with label
1. \cite[Sec. 2]{riffleshufflerepeatedcards} analyses this chain
in detail for the ``riffle-shuffles'' of \cite{originalriffleshuffle}.
\end{example}
The example of lumping to coset representatives can be further generalised
to a framework concerning orbits under group actions; notice in the
phrasing below that the theorem applies to more than random walks
on groups.
\begin{thm}[Strong lumping to orbits under group actions]
 \label{thm:stronglumping-groupaction} Let $V$ be a vector space
with basis $\calb$, and $\T:V\rightarrow V,\eta:V\rightarrow\mathbb{R}$
be linear maps allowing the Doob transform Markov chain construction
of Theorem \ref{thm:doob-transform}. Let $\{s_{i}:\calb\rightarrow\calb\}$
be a group of maps whose linear extensions to $V$ commute with $\T$
, and which satisfies $\eta\circ s_{i}=\eta$. Then the Markov chain
defined by $\T$ lumps to a chain on the $\{s_{i}\}$-orbits of $\calb$.
\end{thm}
This theorem recovers Example \ref{ex:cardshuffleIB_aceofspades}
above, of lumping a random walk on a group to right cosets, by letting
$s_{i}$ be left-multiplication by $i$, as $i$ ranges over the subgroup
$H$. Since $s_{i}$ is left-multiplication and $\T$ is right-multiplication,
they obviously commute. Hence any right-multiplication random walk
on a group lumps via taking right cosets.
\begin{proof}
Let $\barcalb$ be the sets of $\{s_{i}\}$-orbits of $\calb$, and
let $\theta:\calb\rightarrow\barcalb$ send an element of $\calb$
to its orbit. Let $\barv$ be the vector space spanned by $\barcalb$.
Then $\T$ descends to a well-defined map on $\barv$ because of the
following: if $\theta(x)=\theta(y)$, then $x=s_{i}(y)$ for some
$s_{i}$, so $\T(x)=\T\circ s_{i}(y)=s_{i}\circ\T(y)$ (using $s_{i}$
to denote the linear extension in this last expression), and so $\T(x)$
and $\T(y)$ are in the same orbit. And the condition $\eta\circ s_{i}=\eta$
ensures that $\bar{\eta}$ is well-defined on the $\{s_{i}\}$-orbits.
\end{proof}
Below are two more specialisations of Theorem \ref{thm:stronglumping-groupaction}
that hold for random walks on any group as long as the element being
multiplied is in the centre of the group algebra; we illustrate them
with card-shuffling examples. 
\begin{example}[Values of the top $k$ cards under random-transposition shuffling]
 \label{ex:cardshuffleIB_topcards} This simple example appears to
be new. Recall that the random-transposition shuffle corresponds to
right-multiplication by $q=\frac{1}{n}\id+\frac{2}{n}\sum_{i<j}(i\ j)$
on $\mathbb{R}\sn$. Because $q$ is a sum over all elements in two
conjugacy classes, it is in the centre of $\mathbb{R}\sn$, hence
right-multiplication by $q$ commutes with any other right-multiplication
operator. Let $s_{i}:\sn\rightarrow\sn$ be right-multiplication by
$i\in\snminusk$, the subgroup of $\sn$ which only permutes the last
$n-k$ objects. Then Theorem \ref{thm:stronglumping-groupaction}
implies that the random-transposition shuffle lumps to the orbits
under this action, which are the left-cosets of $\snminusk$ (see
also Example \ref{ex:cardshuffleIB_aceofspades}). The coset $\tau\snminusk$
consists of all decks whose top $k$ cards are $\tau(1),\tau(2),\dots,\tau(k)$
in that order (i.e. all decks that can be obtained from the identity
by first applying $\tau$ and then permuting the bottom $n-k$ cards
in any way). Hence the lumped chain tracks the values of the top $k$
cards.
\end{example}

\begin{example}[Coagulation-fragmentation]
 \label{ex:cardshuffleIB_ccl} As noted in \cite[Sec. 1.5]{randomwalkonmatchings},
the following chain is one specialisation of the processes in \cite{coagulationfragmentation},
modelling the splitting and recombining of molecules. 

Recall that the random-transposition shuffle corresponds to right-multiplication
by the central element $q=\frac{1}{n}\id+\frac{2}{n}\sum_{i<j}(i\ j)$
on $\mathbb{R}\sn$. Let $s_{i}:\sn\rightarrow\sn$ be conjugation
by the group element $i$, for all $i$ in $\sn$. This conjugation
action commutes with right-multiplication by $q$:
\[
s_{i}\circ T(x)=i(qx)i^{-1}=q(ixi^{-1})=T\circ s_{i}(x),
\]
where the second equality uses that $q$ is central. So Theorem \ref{thm:stronglumping-groupaction}
implies that the random-transposition shuffle lumps to the orbits
under this conjugation action, which are the conjugacy classes of
$\sn$. Each conjugacy class of $\sn$ consists precisely of the permutations
of a specific cycle type, and so can be labelled by the multiset of
cycle lengths, a partition of $n$ (see the start of Part II). These
cycle lengths represent the sizes of the molecules. As described in
\cite{coagulationfragmentationpersi}, right-multiplication by a transposition
either joins two cycles or breaks a cycle into two, corresponding
to the coagulation or fragmentation of molecules.
\end{example}
Our final example shows that the group $\{s_{i}\}$ inducing the lumping
of a random walk on $G$ need not be a subgroup of $G$:
\begin{example}[The Ehrenfest Urn]
 \label{ex:ehrenfesturn} \cite[Chap. 3.1.3]{randomwalksongroupspersibook}
Recall that the ``flip a random bit'' chain comes from the linear
transformation on $\mathbb{R}\left(\mathbb{Z}/2\mathbb{Z}\right)^{d}$
of right-multiplication by 
\[
q=\frac{1}{d}\left((1,0,\dots,0)+(0,1,0,\dots,0)+\dots+(0,\dots,0,1)\right).
\]
Let the symmetric group $\sd$ act on $\left(\mathbb{Z}/2\mathbb{Z}\right)^{d}$
by permuting the coordinates. Because $q$ is invariant under this
action, right-multiplication by $q$ commutes with this $\sd$ action.
So the ``flip a random bit'' Markov chain lumps to the $\sd$-orbits,
which track the number of ones in the binary string. The lumped walk
is as follows: if the current state has $k$ ones, remove a one with
probability $\frac{k}{d}$; otherwise add a one. As noted by \cite[Chap. 3.1.3]{randomwalksongroupspersibook},
interpreting the state of $k$ ones as $k$ balls in an urn and $d-k$
balls in another urn gives the classical Ehrenfest urn model: given
two urns containing $d$ balls in total, at each step, remove a ball
from either urn and place it in the other.\end{example}
\begin{rem*}
In the previous three examples, the action $\{s_{i}:G\rightarrow G\}$
respects multiplication on $G$:
\begin{equation}
s_{i}(gh)=s_{i}(g)s_{i}(h)\label{eq:actionrespectsmult}
\end{equation}
Then the random walk from right-multiplication by $q$ lumps to the
$\{s_{i}\}$-orbits if and only if $q$ is invariant under $\{s_{i}\}$.
But equation \ref{eq:actionrespectsmult} need not be true in all
applications of Theorem \ref{thm:stronglumping-groupaction} - see
Example \ref{ex:cardshuffleIB_aceofspades} where $s_{i}$ is left-multiplication.
\end{rem*}

\subsection{I.C: Subspaces and Weak Lumping\label{sec:permutations}}

Now turn to the weaker notion of lumping, where the initial distribution
matters.
\begin{defn}[Weak lumping]
 \label{def:weaklumping}Let $\{X_{t}\},\{X'_{t}\}$ be Markov chains
on state spaces $\Omega,\Omega'$ respectively, with transition matrices
$K,K'.$ Then $\{X'_{t}\}$ is a \emph{weak lumping of $\{X_{t}\}$
via $\theta$, with initial distribution $X_{0}$}, if there is a
surjection $\theta:\Omega\rightarrow\Omega'$ such that the process
$\{\theta(X_{t})\}$, started at the specified $X_{0}$, is a Markov
chain with transition matrix $K'$. In this case, $\{X_{t}\}$ is
a \emph{weak lift of $\{X'_{t}\}$ via $\theta$}.
\end{defn}
\cite[Th. 6.4.1]{lumping} gives a complicated necessary and sufficient
condition for weak lumping. (Note that they write $\pi$ for the initial
distribution and $\alpha$ for the stationary distribution.) Their
simple sufficient condition \cite[Th. 6.4.4]{lumping} has the drawback
of not identifying any valid initial distribution beyond the stationary
distribution - such a result would not be useful for the many descent
operator chains which are absorbing. So instead we appeal to a condition
for continuous Markov processes \cite[Th. 2]{weaklumping}, which
when specialised to the case of discrete time and finite state spaces
reads:
\begin{thm}[Sufficient condition for weak lumping for Markov chains]
 \label{thm:weaklumping-general} \cite[Th. 2]{weaklumping} Let
$K$ be the transition matrix of a Markov chain $\{X_{t}\}$ with
state space $\Omega$. Suppose $\Omega=\amalg\Omega^{i}$, and there
are distributions $\pi^{i}$ on $\Omega$ such that
\begin{enumerate}
\item $\pi^{i}$ is non-zero only on $\Omega^{i}$, 
\item The matrix 
\[
K'(i,j):=\sum_{x\in\Omega^{i},y\in\Omega^{j}}\pi^{i}(x)K(x,y)
\]
satisfies the equality of row vectors $\pi^{i}K=\sum_{j}K'(i,j)\pi^{j}$
for all $i$, or equivalently 
\[
\sum_{x\in\Omega}\pi^{i}(x)K(x,y)=\sum_{j}K'(i,j)\pi^{j}(y)
\]
for all $i$ and all $y$.
\end{enumerate}

Then, from any initial distribution of the form $\sum_{i}\alpha_{i}\pi^{i}$,
for constants $\alpha_{i}$, the chain $\{X_{t}\}$ lumps weakly to
the chain on the state space $\{\Omega^{i}\}$ with transition matrix
$K'$. \qed

\end{thm}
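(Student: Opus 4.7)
The plan is to extract from the hypothesis a single distributional invariant and then propagate it by induction: conditional on any realized prefix $\theta(X_0)=i_0,\dots,\theta(X_t)=i_t$ of positive probability, the conditional distribution of $X_t$ on $\Omega$ is exactly $\pi^{i_t}$. Once this invariant is in place, both the Markov property and the identification of the transition matrix as $K'$ fall out immediately.

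First I would check that $K'$ is genuinely stochastic, so that the statement even makes sense. Since $\pi^i$ is a probability distribution on $\Omega$ and $K$ is stochastic, $\pi^i K$ is again a probability distribution. The sets $\Omega^j$ partition $\Omega$ and each $\pi^j$ is supported on $\Omega^j$, so summing the identity $\pi^i K=\sum_j K'(i,j)\pi^j$ over $y\in\Omega^j$ isolates the coefficient:
\[
K'(i,j)=\sum_{y\in\Omega^j}(\pi^iK)(y)\geq 0,
\]
and summing over all $j$ gives $\sum_j K'(i,j)=1$.

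Next comes the induction. For the base case, write the initial distribution as $P\{X_0=x\}=\sum_i\alpha_i\pi^i(x)$ (with $\alpha_i\geq 0$ and $\sum_i\alpha_i=1$ — any $i$ with $\alpha_i=0$ yields a null event that can be ignored). Because the $\pi^i$ have disjoint supports, conditioning on $\theta(X_0)=i_0$ gives $X_0\sim\pi^{i_0}$. For the inductive step, assume that conditional on $\theta(X_0)=i_0,\dots,\theta(X_{t-1})=i_{t-1}$ the law of $X_{t-1}$ is $\pi^{i_{t-1}}$. By the ordinary Markov property of $\{X_t\}$, the conditional law of $X_t$ given the same information is $\pi^{i_{t-1}}K$, which by hypothesis equals $\sum_j K'(i_{t-1},j)\pi^j$. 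Summing this mixture over $\Omega^{i_t}$ yields
\[
P\{\theta(X_t)=i_t\mid\theta(X_0)=i_0,\dots,\theta(X_{t-1})=i_{t-1}\}=K'(i_{t-1},i_t),
\]
and further conditioning on $\theta(X_t)=i_t$ (when $K'(i_{t-1},i_t)>0$) rescales the component $K'(i_{t-1},i_t)\pi^{i_t}$ back to $\pi^{i_t}$, closing the induction.

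The displayed transition probability depends on the past only through $i_{t-1}$, so $\{\theta(X_t)\}$ is Markov with transition matrix $K'$. The only real subtlety — and the place where I would be most careful — is the book-keeping of conditional events: one must restrict throughout to trajectories of strictly positive probability, and verify that the conditional-distribution invariant is stable under pushing through a step of $K$ precisely because $\pi^iK$ decomposes along the disjoint cells $\Omega^j$. This is the one spot where the partition hypothesis $\Omega=\amalg\Omega^i$ and the support condition on the $\pi^i$ are used simultaneously; everything else is formal.
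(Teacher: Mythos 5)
Your proof is correct: the paper states this theorem without proof (it is cited as the idea underlying Kemeny--Snell, Ex.~6.4.2), and your argument --- propagating the invariant that, conditional on the observed lumped trajectory, $X_t$ has law $\pi^{i_t}$, which both identifies the transition probabilities as $K'(i_{t-1},i_t)$ and shows they depend only on $i_{t-1}$ --- is exactly the standard argument that proof would use. The stochasticity check on $K'$ and the handling of null trajectories are also handled correctly.
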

(This condition was implicitly used in \cite[Ex. 6.4.2]{lumping}.) 
\begin{rem*}
As the proof of Theorem \ref{thm:weaklumping-linearmap} will show,
the case of chains from the Doob transform without rescaling corresponds
to each $\pi^{i}$ being the uniform distribution on $\Omega^{i}$.
In this case, the conditions above simplify: we require that $K'(i,j):=\frac{1}{|\Omega^{i}|}\sum_{x\in\Omega^{i},y\in\Omega^{j}}K(x,y)$
satisfy $\frac{1}{|\Omega^{i}|}\sum_{x\in\Omega}K(x,y)=\frac{1}{|\Omega^{j}|}\sum_{j}K'(i,j)\pi^{j}(y)$
for all $i$ and all $y\in\Omega^{j}$. In other words, the only requirement
is that $\sum_{x\in\Omega}K(x,y)$ depends only on $\Omega^{j}\ni y$,
not on $y$, a condition somewhat dual to Doob's.
\end{rem*}
For Markov chains arising from the Doob transform, the condition $\pi^{i}K=\sum_{j}K'(i,j)\pi^{j}$
translates to the existence of invariant subspaces.  It may seem
strange to consider the subspace spanned by $\left\{ \sum_{x\in\calb^{i}}x\right\} $,
but Example \ref{ex:t2rshuffleIC} and Section II.C will show two
examples that arise naturally, namely permutation statistics and congruence
Hopf algebras.
\begin{thm}[Weak lumping for Markov chains from linear maps]
 \label{thm:weaklumping-linearmap} Let $V$ be a vector space with
basis $\calb$, and $\T:V\rightarrow V,\eta:V\rightarrow\mathbb{R}$
be linear maps admitting the Doob transform Markov chain construction
of Theorem \ref{thm:doob-transform}. Suppose $\calb=\amalg_{i}\calb^{i}$,
and write $x^{i}$ for $\sum_{x\in\calb^{i}}x$. Let $V'$ be the
subspace of $V$ spanned by the $x^{i}$, and suppose $\T(V')\subseteq V'$.
Define a map $\theta:\calb\rightarrow\{x^{i}\}$ by setting $\theta(x):=x^{i}$
if $x\in\calb^{i}$. Then the Markov chain defined by $\T:V\rightarrow V$
lumps weakly to the Markov chain defined by $\T:V'\rightarrow V'$
(with basis $\calb':=\{x^{i}\}$, and rescaling function the restriction
$\eta:V'\rightarrow\mathbb{R}$) via $\theta$, from any initial distribution
of the form $P\{X_{0}=x\}:=\alpha_{\theta(x)}\frac{\eta(x)}{\eta(\theta(x))}$,
where the $\alpha$s are constants depending only on $\theta(x)$.
In particular, if $\eta\equiv1$ on $\calb$ (so no rescaling is required
to define the chain on $\calb$), the Markov chain lumps from any
distribution which is constant on each $\calb^{i}$.
\end{thm}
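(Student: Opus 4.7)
The plan is to reduce to Theorem \ref{thm:weaklumping-general} by exhibiting distributions $\pi^{i}$ supported on $\calb^{i}$ of the form
\[
\pi^{i}(x) := \frac{\eta(x)}{\eta(x^{i})} \text{ for } x \in \calb^{i}, \qquad \pi^{i}(x) := 0 \text{ otherwise,}
\]
where $\eta(x^{i}) = \sum_{x \in \calb^{i}} \eta(x) > 0$ by linearity of $\eta$ and its positivity on $\calb$. Each $\pi^{i}$ is immediately a probability distribution on $\calb^{i}$.

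First I would check that the Doob transform (Theorem \ref{thm:doob-transform}) applies to $\T|_{V'}$ with rescaling $\eta|_{V'}$, so that the prospective lumped transition matrix $\hatk'$ is well-defined. The function $\eta|_{V'}$ remains an eigenvector of $(\T|_{V'})^{*}$ with eigenvalue $1$, since $\T^{*}\eta = \eta$ on all of $V \supseteq V'$; and it is positive on $\calb' = \{x^{i}\}$ because $\eta(x^{i}) > 0$. Using $\T$-invariance of $V'$, write $\T(x^{i}) = \sum_{j} c_{ij}\, x^{j}$ and compare with the expansion $\T(x^{i}) = \sum_{y \in \calb}\bigl(\sum_{x \in \calb^{i}} K(x,y)\bigr) y$ to deduce that, for each $y \in \calb^{j}$,
\[
c_{ij} = \sum_{x \in \calb^{i}} K(x,y),
\]
independently of which $y$ in $\calb^{j}$ is chosen. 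In particular $c_{ij} \geq 0$, so $[\T|_{V'}]_{\calb'}^{T}$ has non-negative entries, and $\hatk'(x^{i}, x^{j}) = c_{ij}\, \eta(x^{j})/\eta(x^{i})$ is a bona fide transition matrix on $\calb'$.

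The key computation is then direct: for any $y \in \calb$,
\[
(\pi^{i} \hatk)(y) = \sum_{x \in \calb^{i}} \frac{\eta(x)}{\eta(x^{i})}\, K(x,y)\, \frac{\eta(y)}{\eta(x)} = \frac{\eta(y)}{\eta(x^{i})} \sum_{x \in \calb^{i}} K(x,y).
\]
When $y \in \calb^{j}$ the inner sum is $c_{ij}$, giving $(\pi^{i} \hatk)(y) = c_{ij}\, \eta(y)/\eta(x^{i}) = \hatk'(x^{i}, x^{j})\, \pi^{j}(y)$. Thus $\pi^{i} \hatk = \sum_{j} \hatk'(x^{i}, x^{j})\, \pi^{j}$, precisely the hypothesis of Theorem \ref{thm:weaklumping-general} with $K'(i,j) = \hatk'(x^{i}, x^{j})$. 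Since every permitted initial distribution $P\{X_{0} = x\} = \alpha_{\theta(x)}\, \eta(x)/\eta(\theta(x))$ is the mixture $\sum_{i} \alpha_{i}\, \pi^{i}$, that theorem delivers the desired weak lumping to the Doob-transform chain on $\calb'$ via $\theta$.

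The main obstacle is conceptual rather than computational: one has to guess the correct family $\pi^{i}$ and then recognise the coefficients $c_{ij}\, \eta(x^{j})/\eta(x^{i})$ emerging from $\pi^{i}\hatk$ as precisely the entries of the Doob transform of $\T|_{V'}$. Once these ingredients are in place, the two verifications (non-negativity of $c_{ij}$ from $\T$-invariance, and the matching of $\pi^{i}\hatk$ with $\sum_j \hatk'(x^i,x^j)\pi^j$) each reduce to a single line of coefficient extraction.
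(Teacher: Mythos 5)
Your proposal is correct and follows essentially the same route as the paper: the same choice of $\pi^{i}(x)=\eta(x)/\eta(x^{i})$ on $\calb^{i}$, the same reduction of the lumping condition to $\sum_{x\in\calb^{i}}K(x,y)=K'(x^{i},x^{j})$ for $y\in\calb^{j}$, and the same coefficient comparison of the two expansions of $\T(x^{i})$ coming from $\T$-invariance of $V'$. The only difference is that you explicitly verify that $\eta|_{V'}$ remains a harmonic (eigenvalue-$1$) function for the restricted map, a point the paper leaves implicit.
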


Note that, in the simplified case $\eta\equiv1$, it is generally
not true that the restriction $\eta:V'\rightarrow\mathbb{R}$ is constant
on $\calb'$ - indeed, for $x^{i}\in\calb'$, it holds that $\eta(x^{i})=|\calb^{i}|$.
So a weak lumping chain from Theorem \ref{thm:weaklumping-linearmap}
will generally require rescaling.
\begin{rem*}
Suppose the conditions of Theorem \ref{thm:weaklumping-linearmap}
hold, and let $j:V'\hookrightarrow V$ be the inclusion map. Now the
dual map $j^{*}:V^{*}\twoheadrightarrow V'^{*}$, and $\T^{*}:V^{*}\rightarrow V^{*}$,
satisfy the hypotheses of Theorem \ref{thm:stronglumping-linearmap},
except that there may not be suitable rescaling functions $\eta:V^{*}\rightarrow\mathbb{R}$
and $\bar{\eta}:V'^{*}\rightarrow\mathbb{R}$. Because the Doob transform
chain for $\T^{*}$ is the time-reversal of the chain for $\T$ \cite[Th. 3.3.2]{mythesis},
this is a reflection of \cite[Th. 6.4.5]{lumping}.
\end{rem*}
The proof of Theorem \ref{thm:weaklumping-linearmap} is at the end
of this section.
\begin{example}[Number of rising sequences under riffle-shuffling]
 \label{ex:riffleshuffleIC} \cite[Cor. 2]{originalriffleshuffle}
The sequence $\{i,i+1,\dots,i+j\}$ is a rising sequence of a permutation
$\sigma$ if those numbers appear in that order when reading the one-line
notation of $\sigma$ from left to right. Viewing $\sigma$ as a deck
of cards, this says that the card with label $i$ is somewhere above
the card with label $i+1$, which is somewhere above the card with
label $i+2$, and so on, until the card with label $i+j$. Formally,
$\sigma^{-1}(i)<\sigma^{-1}(i+1)<\dots<\sigma^{-1}(i+j)$. Unless
otherwise specified, a rising sequence is assumed to be maximal, i.e.
$\sigma^{-1}(i-1)>\sigma^{-1}(i)<\sigma^{-1}(i+1)<\dots<\sigma^{-1}(i+j)>\sigma^{-1}(i+j+1)$. 

Following \cite{originalriffleshuffle}, write $R(\sigma)$ for the
number of rising sequences in $\sigma$. This statistic is also written
$\ides(\sigma)$, as it is the number of descents in $\sigma^{-1}$.
For example, $R(2,4,5,3,1)=3$, the three rising sequences being $\{1\}$,
$\{2,3\}$ and $\{4,5\}$.

\cite{originalriffleshuffle} studied the popular riffle-shuffle model,
where the deck is cut into two according to a binomial distribution
and interleaved. (We omit the details as this shuffle is not the focus
of the present paper). This arises from right-multiplication in $\mathbb{R}\sn$
by 
\[
q=\frac{n+1}{2^{n}}\id+\frac{1}{2^{n}}\sum_{R(\sigma)=2}\sigma.
\]
\cite[Cor. 2]{originalriffleshuffle} shows that riffle-shuffling,
if started from the identity, lumps weakly via the number of rising
sequences. This result can be slightly strengthened by applying the
present Theorem \ref{thm:weaklumping-linearmap} in conjunction with
\cite[Cor. 3]{originalriffleshuffle}, which proves explicitly that
$q$ generates a subalgebra spanned by $x^{i}:=\sum_{R(\sigma)=i}\sigma$,
for $1\leq i\leq n$. The authors recognised this subalgebra as equivalent
to Loday's ``number of descents'' subalgebra \cite{solomondescentalg}.
(More precisely: the linear extension to $\mathbb{R}\sn$ of the inversion
map $I(\sigma):=\sigma^{-1}$ is an algebra antimorphism - i.e. $I(\sigma\tau)=I(\tau)I(\sigma)$
- and it sends $x^{i}$ to the sum of permutations with $i-1$ descents,
which span Loday's algebra.) Since the basis elements $x^{i}$ have
the form stipulated in Theorem \ref{thm:weaklumping-linearmap}, it
follows that the lumping via the number of rising sequences is valid
starting from any distribution that is constant on the summands of
each $x^{i}$, i.e. on each subset of permutations with the same number
of rising sequences.
\end{example}
The key idea in the previous example is that, if $q\in\mathbb{R}G$
generates a subalgebra $V'$ of $\mathbb{R}G$ of the form described
in Theorem \ref{thm:weaklumping-linearmap}, then this produces a
weak lumping of the random walk on $G$ given by right-multiplication
by $q$. We apply this to the top-to-random shuffle:
\begin{example}[Length of last rising sequence under top-to-random shuffling]
 \label{ex:t2rshuffleIC} This simple example appears to be new.
In addition to the definitions in Example \ref{ex:riffleshuffleIC},
more terminology is necessary. The \emph{length} of the rising sequence
$\{i,i+1,\dots,i+j\}$ is $j+1$. Following \cite{cppriffleshuffle},
write $L(\sigma)$ for the length of the last rising sequence, meaning
the one which contains $n$. For example, the rising sequences of
$(2,4,3,5,1)$ have lengths $1,2,2$ respectively, and $L(2,4,3,5,1)=2$.

Recall that the top-to-random shuffle is given by right-multiplication
by 
\[
q=\frac{1}{n}\sum_{i=1}^{n}(i\ i-1\ \dots\ 1).
\]
The rising sequences of the cycles $(i\ i-1\ \dots\ 1)$ are precisely
$\{1\}$ and $\{2,3,\dots,n\}$, and these are the only permutations
$\sigma$ with $L(\sigma)=n-1$. \cite[Th. 4.2]{cppriffleshuffle}
shows that the algebra generated by $q$ is spanned by $x^{i}:=\sum_{L(\sigma)=i}\sigma$,
for $1\leq i\leq n$. Thus Theorem \ref{thm:weaklumping-linearmap}
shows that top-to-random shuffling weakly lumps via the length of
the last rising sequence, starting from any distribution that is constant
on permutations with the same last rising sequence length. In particular,
since the identity is the only permutation with $L(\sigma)=n$, the
lumping holds if the deck started at the identity permutation.\end{example}
\begin{proof}[Proof of Theorem \ref{thm:weaklumping-linearmap}]
In the notation of Theorem \ref{thm:weaklumping-general}, the distribution
$\pi^{i}$ is 
\[
\pi^{i}(x)=\begin{cases}
\frac{\eta(x)}{\eta(x^{i})} & \mbox{if }x\in\calb^{i};\\
0 & \mbox{otherwise},
\end{cases}
\]
which clearly satisfies condition 1.

To check condition 2, write $\T',\eta'$ for the restrictions of $\T,\eta$
to $V'$, and set $K=[\T]_{\calb}^{T}$, $K'=[\T']_{\calb'}^{T}$.
As in the proof of Theorem \ref{thm:stronglumping-linearmap}, we
start by showing that $\eta'$ is a possible rescaling function for
$\T'$: 
\[
(\T')^{*}(\eta')=\eta'\circ\T'=(\eta\circ\T)|_{V'}=(\beta\eta)|_{V'}=\beta\eta',
\]
so $\eta'$ is an eigenvector of $\T'^{*}$ with eigenvalue $\beta$.
And $\eta'$ is positive on $\calb'$ because $\eta'(x^{i})=\sum_{x\in\calb^{i}}\eta(x)$,
a sum of positive numbers. 

Write $\hatk,\hatk'$ for the associated transition matrices. (As
in the proof of Theorem \ref{thm:stronglumping-linearmap}, we check
that the entries of $K'$ are non-negative later, in Equation \ref{eq:weaklump}.)
We need to show that, for all $i$ and for all $y\in\calb$, 
\[
\sum_{x\in\calb^{i}}\pi^{i}(x)\hatk(x,y)=\sum_{j}\hatk'(x^{i},x^{j})\pi^{j}(y).
\]
Note that $\pi^{j}(y)$ is zero unless $y\in\calb^{j}$, so only one
summand contributes to the right hand side. By substituting for $\pi^{i},\hatk$
and $\hatk'$, the desired equality is equivalent to 
\[
\sum_{x\in\calb^{i}}\frac{\eta(x)}{\eta(x^{i})}\frac{1}{\beta}K(x,y)\frac{\eta(y)}{\eta(x)}=K'(x^{i},x^{j})\frac{1}{\beta}\frac{\eta(x^{j})}{\eta(x^{i})}\frac{\eta(y)}{\eta(x^{j})},
\]
which reduces to 
\begin{equation}
\sum_{x\in\calb^{i}}K(x,y)=K'(x^{i},x^{j})\label{eq:weaklump}
\end{equation}
 for $y\in\calb^{j}$.

Now, by expanding in the $\calb'$ basis, 
\[
\T'(x^{i})=\sum_{j}K'(x^{i},x^{j})x^{j}=\sum_{j}K'(x^{i},x^{j})\sum_{y\in\calb^{j}}y.
\]
On the other hand, a $\calb$ expansion yields
\[
\T'(x^{i})=\sum_{x\in\calb^{i}}\T(x)=\sum_{x\in\calb^{i}}\sum_{y\in\calb}K(x,y)y.
\]
So 
\[
\sum_{y\in\calb}\sum_{x\in\calb^{i}}K(x,y)y=\sum_{j}K'(x^{i},x^{j})\sum_{y\in\calb^{j}}y=\sum_{y}\sum_{j:y\in\calb^{j}}K'(x^{i},x^{j})y,
\]
and since $\calb$ is a basis, the coefficients of $y$ on the two
sides must be equal.
\end{proof}

\section{Part II: Lumpings from Subquotients of Combinatorial Hopf Algebras}

This part specialises the strong and weak lumping criteria of Part
I/Section 2 to Markov chains from descent operators on combinatorial
Hopf algebras \cite{descentoperatorchains}. Our main running example
(which in fact motivated the entire paper) is a lift for the ``down-up
chain on partitions'', where at each step a random box is removed
and a possibly different random box added, according to a certain
distribution (see the second half of Section II.A). The stationary
distribution of this chain is the Plancherel measure $\pi(\lambda)=\frac{(\dim\lambda)^{2}}{n!}$,
where $\dim\lambda$ is the dimension of the symmetric group representation
indexed by $\lambda$, or equivalently the number of standard tableaux
of shape $\lambda$ (see below for definitions). Because $(\dim\lambda)^{2}$
is the number of permutations whose RSK shape \cite[Sec. 7.11]{stanleyec2}\cite[Sec. 4]{fultonyoungtableaux}
is $\lambda$, it is natural to ask if the down-up chain on partitions
is the lumping of a chain on permutations with a uniform stationary
distribution.

Fulman \cite[Th. 3.1]{jasonlift} proved that this is almost true
for top-to-random shuffling: the probability distribution of the RSK
shape after $t$ top-to-random shuffles from the identity, agrees
with the probability distribution after $t$ steps of the down-up
chain on partitions. However, it is not true that top-to-random shuffles
lump via RSK shape. \cite[Fig. 1]{toptorandomshuffleboxmoves} is
an explicit 8-step trajectory of the partition chain that has no corresponding
trajectory in top-to-random shuffling. In other words, it is possible
for eight top-to-random shuffles to produce an RSK shape equal to
the end of the exhibited partition chain trajectory, but no choice
of intermediate steps will have RSK shapes equal to the given trajectory.
(Technically, this figure is written for random-to-top, the time-reversal
of top-to-random, so one should read it backwards from right to left,
apply it to top-to-random shuffles.)

The present Theorem \ref{thm:cpplift} finds that top-to-random shuffling
can be modified to give an honest weak lift of the down-up chain on
partitions: every time a card is moved, relabel it with the current
time that we moved the card, then track the (reversed) relative orders
of the labels. (A different interpretation without cards is in Section
\ref{sub:permutations}/II.C.) This lift is constructed in two stages
- Section \ref{sub:tableaux}/II.B builds a strong lift to tableaux
using Hopf algebra quotients, and Section \ref{sub:permutations}/II.C
builds a weak lift to permutations using Hopf subalgebras. Section
\ref{sec:startatid}/II.D then shows that the multistep transition
probabilities of the relabelled chain agree with the unmodified top-to-random
shuffle, if both are started at the identity, thus recovering the
Fulman result.

\subsection{Notation\label{sec:combinotation}}

A \emph{partition} $\lambda$ is a weakly-decreasing sequence of positive
integers: $\lambda:=(\lambda_{1},\dots,\lambda_{l})$ with $\lambda_{1}\geq\dots\geq\lambda_{l}>0$.
This is a \emph{partition of $n$}, denoted $\lambda\vdash n$, if
$\lambda_{1}+\dots+\lambda_{l}=n$. We will think of a partition $\lambda$
as a diagram of left-justified boxes with $\lambda_{1}$ boxes in
the topmost row, $\lambda_{2}$ boxes in the second row, etc. For
example, (5,2,2) is a partition of 9, and below is its diagram.

\noindent \begin{center} \begin{tabular}{c} \tableau{ \e & \e & \e & \e & \e \\ \e & \e \\ \e & \e } \end{tabular} \par\end{center}

A \emph{tableau of shape $\lambda$} is a filling of each of the boxes
in $\lambda$ with a positive integer. The \emph{shift} of a tableaux
$T$ by an integer $k$, denoted $T[k]$, increases each filling of
$T$ by $k$. A tableau is \emph{standard} if it is filled with $\{1,2,\dots,n\}$,
each integer occurring once. If no two boxes of a tableau $T$ has
the same filling, then its \emph{standardisation} $\std(T)$ is computed
by replacing the smallest filling by 1, the second smallest filling
by 2, and so on. Clearly $\std(T)$ is a standard tableau, of the
same shape as $T$. A box $b$ of $T$ is \emph{removable} if the
difference $T\backslash b$ is a tableau. Below shows a tableau of
shape $(5,2,2),$ its shift by 3, and its standardisation. The removable
boxes in the first tableau are $11$ and $13$.

\noindent \begin{center} \begin{tabular}{ccc} \tableau{ 1 & 2 & 5 & 10 & 13 \\ 4 & 8 \\ 6 & 11 } &  \tableau{ 4 & 5 & 8 & 13 & 16 \\ 7 & 11 \\ 9 & 14 } & \tableau{ 1 & 2 & 4 & 7 & 9 \\ 3 & 6 \\ 5 & 8 } \tabularnewline \tabularnewline $T$ & $T[3]$ & $\std(T)$ \tabularnewline \end{tabular} \par\end{center}

For a partition $\lambda$, write $\dim(\lambda)$ for the number
of standard tableaux of shape $\lambda$, as this is the dimension
of the symmetric group representation corresponding to $\lambda$
\cite[Chap. 2]{snreps}.

In the same vein, this paper will regard permutations as ``standard
words'', using one-line notation: $\sigma:=(\sigma(1),\dots,\sigma(n))$.
The \emph{length} of a word is its number of letters. The \emph{shift}
of a word $\sigma$ by an integer $k$, denoted $\sigma[k]$, increases
each letter of $\sigma$ by $k$. If a word $\sigma$ has all letters
distinct, then its \emph{standardisation} $\std(\sigma)$ is computed
by replacing the smallest letter by 1, the second smallest letter
by 2, and so on. Clearly $\std(\sigma)$ is a permutation. For example,
$\sigma=(6,1,4,8,2,11,10,13,5)$ is a word of length 9. Its shift
by 3 is $\sigma[3]=(9,4,7,11,5,14,13,16,8)$, and its standardisation
is $\std(\sigma)=(5,1,3,6,2,8,7,9,4)$.

We assume the reader is familiar with \emph{RSK insertion}, a map
from permutations to tableaux (only the insertion tableau is relevant
here, not the recording tableau), see \cite[Sec. 7.11]{stanleyec2}\cite[Sec. 4]{fultonyoungtableaux}.

A \emph{weak-composition} $D$ (also called a decomposition in \cite{hopfmonoidmarcelo})
is a list of non-negative integers $\left(d_{1},d_{2},\dots,d_{l(D)}\right)$.
This is a \emph{weak-composition of $n$}, denoted $D\vdash n$, if
$d_{1}+\dots+d_{l}=n$. For example, $(1,3,0,2,2,0,1)$ is a weak-composition
of 11.

A \emph{composition} $I$ is a list of positive integers $\left(i_{1},i_{2},\dots,i_{l(I)}\right)$,
where each $i_{k}$ is a \emph{part}. This is a \emph{composition
of $n$}, denoted $I\vdash n$, if $i_{1}+\dots+i_{l}=n$. For example,
$(1,3,2,2,1)$ is a composition of 11. Define a partial order on the
compositions of $n$: say $J\leq I$ if $J$ can be obtained by joining
adjacent parts of $I$. For example, $(6,2,1)\leq(1,3,2,2,1)$, and
also $(1,3,4,1)\leq(1,3,2,2,1)$.

The \emph{descent set} of a word $w=(w_{1},\dots,w_{n})$ is defined
to be $\left\{ j\in\{1,2,\dots,n-1\}|w_{j}>w_{j+1}\right\} $. It
is more convenient here to rewrite the descent set as a composition
in the following way: a word $w$ has \emph{descent composition} $\Des(w)=I$
if $i_{j}$ is the number of letters between the $j-1$th and $j$th
descent, i.e. if $w_{i_{1}+\dots+i_{j}}>w_{i_{1}+\dots+i_{j}+1}$
for all $j$, and $w_{r}\leq w_{r+1}$ for all $r\neq i_{1}+\dots+i_{j}$.
For example, the descent set of $(6,1,4,8,2,11,10,13,5)$ is $\{1,4,6,8\}$,
and $\Des(6,1,4,8,2,11,10,13,5)=(1,3,2,2,1)$. Note that $\Des(\sigma^{-1})$
consists of the lengths of the rising sequences (as in Example \ref{ex:riffleshuffleIC})
of $\sigma$.

\subsection{II.A: Markov Chains from Descent Operators, and the Down-Up Chain
on Partitions}

The Markov chains in this and subsequent sections arise from descent
operators on combinatorial Hopf algebras, through the framework of
\cite{descentoperatorchains} as summarised below.

Loosely speaking, a combinatorial Hopf algebra is a graded vector
space $\calh=\bigoplus_{n=0}^{\infty}\calh_{n}$ with a basis $\calb=\amalg_{n}\calbn$
indexed by a family of ``combinatorial objects'', such as partitions,
words, or permutations. The grading reflects the ``size'' of these
objects. $\calh$ admits a linear product map $m:\calh\otimes\calh\rightarrow\calh$
and a linear coproduct map $\Delta:\calh\rightarrow\calh\otimes\calh$
satisfying certain compatibility axioms; see the survey \cite{vicreinernotes}
for details. These two operations encode respectively how the combinatorial
objects combine and break. The concept was originally due to Joni
and Rota \cite{jonirota}, and the theory has since been expanded
in \cite{hivertcspolynomialrealisation,qsymisterminal,towersofalgs,hopfmonoids}
and countless other works. 

To define the descent operators, it is necessary to introduce a refinement
of the coproduct relative to the grading. Given a weak-composition
$D=\left(d_{1},d_{2},\dots,d_{l(D)}\right)$ of $n$, follow \cite{hopfmonoids}
and define $\Delta_{D}:\calh_{n}\rightarrow\calh_{d_{1}}\otimes\dots\otimes\calh_{d_{l(D)}}$
to be a projection to the graded subspace $\calh_{d_{1}}\otimes\dots\otimes\calh_{d_{l(D)}}$
of the iterated coproduct $(\Delta\otimes\id^{\otimes l(D)-1})\circ\dots\circ(\Delta\otimes\id\otimes\id)\circ(\Delta\otimes\id)\circ\Delta$.
So $\Delta_{D}$ models breaking an object into $l(D)$ pieces, of
sizes $d_{1},\dots,d_{l(D)}$ respectively. See the examples below.
\begin{example}
\label{ex:shufflealg,t2r} An instructive example of a combinatorial
Hopf algebra is the \emph{shuffle algebra} $\calsh$. Its basis is
the set of all words in the letters $\{1,2,\dots,N\}$ (for some $N$,
whose exact value is often unimportant). View the word $(w_{1},\dots,w_{n})$
as the deck of cards with card $w_{1}$ on top, card $w_{2}$ second
from the top, and so on, so card $w_{n}$ is at the bottom. The degree
of a word is its number of letters, i.e. the number of cards in the
deck. The product of two words, also denoted by $\shuffle$, is the
sum of all their interleavings (with multiplicity), and the coproduct
is deconcatenation, or cutting the deck. For example:
\begin{align*}
m((1,5)\otimes(5,2))=(1,5)\shuffle(5,2) & =2(1,5,5,2)+(1,5,2,5)+(5,1,5,2)+(5,1,2,5)+(5,2,1,5);\\
\Delta_{1,3}(1,5,5,2) & =(1)\otimes(5,5,2);\\
\Delta_{2,0,2}(1,5,5,2) & =(1,5)\otimes()\otimes(5,2).
\end{align*}
(Here, $()$ denotes the empty word, the unit of $\calsh$.) Observe
that 
\[
\frac{1}{4}m\circ\Delta_{1,3}(1,5,5,2)=\frac{1}{4}m((1)\otimes(5,5,2))=\frac{1}{4}(1,5,5,2)+\frac{1}{4}(5,1,5,2)+\frac{1}{4}(5,5,1,2)+\frac{1}{4}(5,5,2,1).
\]
The four words that appear on the right hand side are precisely all
the possible results after a top-to-random shuffle of the deck $(1,5,5,2)$,
and the coefficient of each word is the probability of obtaining it.
The same is true for decks of $n$ cards and the operator $\frac{1}{n}m\circ\Delta_{1,n-1}$.

Instead of removing only the top card - i.e. creating two piles of
sizes 1 and $n-1$ respectively - consider cutting the deck into $l$
piles of sizes $d_{1,}\dots,d_{l}$ for some weak-composition $D$
of $n$. Then interleave these $l$ piles together into one pile -
i.e. uniformly choose an ordering of all $n$ cards such that any
two cards from the same one of the $l$ piles stay in the same relative
order. Such a shuffle is described by (a suitable multiple of) the
composite operator $m\circ\Delta_{D}$. These composites (and their
linear combinations) are the \emph{descent operators} of \cite{descentoperators},
so named because, on a commutative or cocommutative Hopf algebra,
their composition is equivalent to the multiplication in Solomon's
descent algebra \cite{solomondescentalg} of the symmetric group.
This descent algebra view will be useful in the proof of Theorem \ref{thm:multistepprobofshuffles},
relating these shuffles to a different chain on permutations.

\cite{cppriffleshuffle} studied more general ``cut-and-interleave''
shuffles where the cut composition $D$ is random, according to some
probability distribution $P$ on the weak compositions of $n$. These
$P$-shuffles are described by 
\[
m\circ\Delta{}_{P}:=\sum_{D}\frac{P(D)}{\binom{n}{d_{1}\dots d_{l(D)}}}m\circ\Delta_{D},
\]
more precisely, their transition matrices are $\left[m\circ\Delta_{P}\right]_{\calbn}^{T}$,
where $\calbn$ is the word basis of the shuffle algebra. (The notation
$m\circ\Delta_{P}$, from \cite{descentoperatorchains}, is non-standard
and coined especially for this Markov chain application of descent
operators.) Notice that, if $P$ is concentrated at $(1,n-1)$, then
$m\circ\Delta_{P}=\frac{1}{n}m\circ\Delta_{1,n-1}$, corresponding
to the top-to-random shuffle as described in Example \ref{ex:shufflealg,t2r}.
The present paper will focus on the case where $P$ is concentrated
at $(n-1,1)$, so $m\circ\Delta_{P}=\frac{1}{n}m\circ\Delta_{n-1,1}$
models the ``bottom-to-random'' shuffle.

\cite{hopfpowerchains,descentoperatorchains} extend this idea to
other combinatorial Hopf algebras, using $m\circ\Delta_{P}$ to construct
Markov chains which model first breaking a combinatorial object into
$l$ pieces where the distribution of piece size is $P$, and then
reassembling the pieces. In particular, $\frac{1}{n}m\circ\Delta_{n-1,1}$
describes removing a piece of size 1 and reattaching it. For general
combinatorial Hopf algebras, this construction requires the Doob transform
(Theorem \ref{thm:doob-transform}).

To simplify the exposition, focus on the case where $|\calb_{1}|=1$,
i.e. there is only one combinatorial object of size 1. (This is not
true of the shuffle algebra, where $\calb_{1}$ consists of all the
different possible single card labels. Hence we will ignore the shuffle
algebra henceforth, until Section II.D.) Writing $\bullet$ for this
object, $\Delta_{1,\dots,1}(x)$ then is a multiple of $\bullet\otimes\dots\otimes\bullet=\bullet^{\otimes\deg x}$.
\cite[Lem 3.3]{descentoperatorchains} showed that, under the conditions
in Theorem \ref{thm:descentoperatorchain} below, this multiple is
a rescaling function. (Briefly, conditions i and ii guarantee that
$K:=[m\circ\Delta_{P}]$, the matrix before the Doob transform, has
non-negative entries, and condition iii ensures $\eta$ is positive
on $\calbn$.)\end{example}
\begin{thm}[Markov chains from descent operators]
 \label{thm:descentoperatorchain} \cite[Lem 3.3, Th. 3.4]{descentoperatorchains}
Suppose $\calh=\bigoplus_{n}\calhn$ is a graded connected Hopf algebra
with a basis $\calb=\amalg_{n}\calbn$ satisfying:\setcounter{enumi}{-1}
\begin{enumerate}[label=\roman*.]
\item $\calb_{1}=\{\bullet\}$;
\item for all $w,z\in\calb$, the expansion of $m(w\otimes z)$ in the $\calb$
basis has all coefficients non-negative;
\item for all $x\in\calb$, the expansion of $\Delta(x)$ in the $\calb\otimes\calb$
basis has all coefficients non-negative;
\item for all $x\in\calbn$ with $n>1$, it holds that $\Delta(x)\neq1\otimes x+x\otimes1$
(i.e. $\calbn$ contains no primitive elements when $n>1$).
\end{enumerate}

Then, for any fixed $n$ and any probability distribution $P(D)$
on weak-compositions $D$ of $n$, the corresponding descent operator
$m\circ\Delta{}_{P}:\calbn\rightarrow\calbn$ given by 
\[
m\circ\Delta{}_{P}:=\sum_{D}\frac{P(D)}{\binom{n}{d_{1}\dots d_{l(D)}}}m\circ\Delta_{D}
\]
and rescaling function $\eta:\calbn\rightarrow\mathbb{R}$ given by
\[
\eta(x):=\mbox{coefficient of }\bullet^{\otimes n}\mbox{ in }\Delta_{1,\dots,1}(x)
\]
admit the Doob transform construction of Theorem \ref{thm:doob-transform}. 

\end{thm}
The stationary distributions of these chains are easy to describe:
\begin{thm}
\label{thm:descentop_stationarydistribution} \cite[Th. 3.12]{descentoperatorchains}
The unique stationary distribution of the Markov chains constructed
in Theorem \ref{thm:descentoperatorchain} is
\[
\pi(x)=\frac{1}{n!}\eta(x)\times\mbox{coefficient of }x\mbox{ in }\bullet^{\otimes n},
\]
independent of the distribution $P$.
\end{thm}
\cite[Th. 3.5]{descentoperatorchains} derives the eigenvalues of
all descent operator chains. For our main example of $\frac{1}{n}m\circ\Delta_{n-1,1}$,
these eigenvalues are:
\begin{thm}
\label{thm:downup_evalues} \cite[Th. 4.4.i]{descentoperatorchains}
For the descent operator $\frac{1}{n}m\circ\Delta_{n-1,1}$, the eigenvalues
of the chains constructed in Theorem \ref{thm:descentoperatorchain}
are $\frac{j}{n}$ for $0\leq j\leq n$, $j\neq n-1$, and their multiplicities
are $\dim\calh_{n-j}-\dim\calh_{n-j-1}$.
\end{thm}
\cite[Th. 4.4]{descentoperatorchains} also describes some eigenvectors.
Since their formulae are complicated and they are not the focus of
the present paper, we do not go into detail here.

\subsubsection{Example: the Down-Up Chain on Partitions\label{sub:partitions}}

We explain in detail below the Markov chain that arises from applying
the Doob transform to $\frac{1}{n}m\circ\Delta_{n-1,1}$ on the algebra
of symmetric functions. This chain is one focus of \cite{jasonlift}.

Work with the algebra of symmetric functions $\Lambda$ \cite[Chap. 7]{stanleyec2},
with basis the Schur functions $\{s_{\lambda}\}$, which are indexed
by partitions. For clarity, we will often write $\lambda$ in place
of $s_{\lambda}$. The degree of $\lambda$ is the number of boxes
in its diagram.

As described in \cite[Sec. 2.5]{vicreinernotes}, $\Lambda$ carries
the following Hopf structure: 
\begin{eqnarray*}
m(s_{\nu}\otimes s_{\mu})=s_{\nu}s_{\mu} & = & \sum_{\lambda}c_{\nu\mu}^{\lambda}s_{\lambda};\\
\Delta(s_{\lambda}) & = & \sum_{\nu,\mu}c_{\nu\mu}^{\lambda}s_{\nu}\otimes s_{\mu},
\end{eqnarray*}
where $c_{\nu\mu}^{\lambda}$ are the Littlewood-Richardson coefficients.
These simplify greatly when $\mu$ is the partition $(1)$ - namely
$c_{\nu\mu}^{\lambda}=1$ if the diagrams of $\lambda$ and $\nu$
differ by one box, and $c_{\nu\mu}^{\lambda}=0$ otherwise. (This
is one case of the \emph{Pieri rule}.) Writing $\lambda\sim\nu\cup\square$
and $\nu\sim\lambda\backslash\square$ when the diagram of $\lambda$
can be obtained by adding one box to the diagram of $\nu$, the above
can be summarised as
\begin{eqnarray*}
m(\nu\otimes(1)) & = & \sum_{\lambda:\lambda\sim\nu\cup\square}\lambda;\\
\Delta_{\deg\nu-1,1}(\lambda) & = & \left(\sum_{\nu:\nu\sim\lambda\backslash\square}\nu\right)\otimes(1).
\end{eqnarray*}
For example, \begin{equation*} m \left( \raisebox{1.5ex}{\tableau{ \e & \e \\ \e \\ \e }} \otimes  \raisebox{1.5ex}{\tableau{ \e }}\right) = \raisebox{1.5ex}{ \tableau{ \e & \e & \e\\ \e \\ \e } } + \raisebox{1.5ex}{ \tableau{ \e & \e \\ \e & \e \\ \e } } + \raisebox{1.5ex}{ \tableau{ \e & \e \\ \e \\ \e \\ \e} }; \end{equation*} \begin{equation*} \Delta_{4,1} \left( \raisebox{1.5ex}{\tableau{ \e & \e & \e \\ \e \\ \e  } } \right) =\raisebox{1.5ex}{ \tableau{ \e & \e \\ \e \\ \e } } \otimes  \raisebox{1.5ex}{\tableau{ \e } }  +\raisebox{1.5ex}{ \tableau{ \e & \e & \e \\ \e } }   \otimes \raisebox{1.5ex}{\tableau{ \e } }  . \end{equation*}

To investigate how the Doob transform turns this data into probabilities,
it is necessary to first understand the rescaling function $\eta$.
By Theorem \ref{thm:descentoperatorchain}, $\eta(\lambda)$ is the
coefficient of $(1)^{\otimes\deg(\lambda)}$ in $\Delta_{1,\dots,1}(\lambda)$,
i.e. the number of ways to remove boxes one by one from $\lambda$.
Since such ways are in bijection with the standard tableaux of shape
$\lambda$, it holds that $\eta(\lambda)=\dim\lambda$. Hence the
Doob transform creates the following transition matrix from $\frac{1}{n}m\circ\Delta_{n-1,1}$:
\begin{eqnarray*}
\hatk(\lambda,\mu) & = & \sum_{\nu:\nu\sim\lambda\backslash\square,\mu\sim\nu\cup\square}\frac{1}{n}\frac{\dim\mu}{\dim\lambda}\\
 & = & \sum_{\nu:\nu\sim\lambda\backslash\square,\mu\sim\nu\cup\square}\frac{1}{n}\frac{\dim\mu}{\dim\nu}\frac{\dim\nu}{\dim\lambda}.
\end{eqnarray*}

The second expression suggests a decomposition of each time step into
two parts:
\begin{enumerate}
\item Remove a box from $\lambda$ to obtain $\nu$, with probability $\frac{\dim\nu}{\dim\lambda}$.
\item Add a box to $\nu$ to obtain $\mu$, with probability $\frac{1}{n}\frac{\dim\mu}{\dim\nu}$.
\end{enumerate}
The two parts are illustrated by down-right and up-right arrows respectively
in the following example trajectory in degree 5:\noindent \begin{center} 
\begin{tikzpicture} 
\node (A) at (0,0) {\tableau{ \e & \e \\ \e \\ \e \\ \e }}; 
\node (B) at (4,0)  {\tableau{ \e & \e & \e \\ \e \\ \e  }}; 
\node (C) at (8,0)  {\tableau{ \e & \e & \e \\ \e & \e  }};  
\node (D) at (12,0) {\tableau{ \e & \e  \\ \e & \e \\ \e }}; 
\node (E) at (2,-2) {\tableau{ \e & \e \\ \e \\ \e }}; 
\node (F) at (6,-2)  {\tableau{ \e & \e & \e \\ \e }}; 
\node (G) at (10,-2)  {\tableau{ \e & \e \\ \e & \e  }}; 
\draw[->] (A) -- (E); 
\draw[->] (E) -- (B); 
\draw[->] (B) -- (F); 
\draw[->] (F) -- (C); 
\draw[->] (C) -- (G); 
\draw[->] (G) -- (D); 
\end{tikzpicture}
\par\end{center}

One easy way to implement step 1, the box removal, is via the \emph{hook
walk} of \cite{hookwalk}: uniformly choose a box $b$, then uniformly
choose a box in the \emph{hook} of $b$ - that is, to the right or
below $b$ - and continue uniformly picking from successive hooks
until you reach a removable box. Similarly, step 2 can be implemented
using the \emph{complimentary hook walk} of \cite{hookwalk2}: start
at the box (outside the partition diagram) in row $n$, column $n$,
uniformly choose a box in its \emph{complimentary hook} - that is,
to its left or above it, and outside of the partition - and continue
uniformly picking from complimentary hooks until you reach an addable
box.

The transition matrix of this chain in degree 3 is\[ \def\arraystretch{1.5} \begin{array}{c|ccc}  & (3) & (2,1) & (1,1,1)\\ \hline (3) & \frac{1}{3} & \frac{2}{3} & 0\\ (2,1) & \frac{1}{6} & \frac{2}{3} & \frac{1}{6}\\ (1,1,1) & 0 & \frac{2}{3} & \frac{1}{3} \end{array}. \]

To interpret the Markov chain on partitions from other descent operators
$m\circ\Delta_{P}$, it is necessary to view a partition of $n$ as
an irreducible representation of $\sn$, as explained in \cite[Chap. 2]{snreps}.
Then the multiplication and comultiplication of partitions come respectively
from the induction of the external product and the restriction to
Young subgroups - for irreducible representations corresponding to
the partitions $\mu\vdash i$, $\nu\vdash j$ and $\lambda\vdash n$,
\[
\mu\nu=\Ind_{\si\times\sj}^{\mathfrak{S}_{i+j}}\mu\times\nu;\quad\Delta_{n-i,i}(\lambda)=\Res_{\mathfrak{S}_{n-i}\otimes\si}^{\sn}\lambda.
\]
So the chains from $m\circ\Delta_{P}$ model restriction-then-induction,
as detailed below.
\begin{defn}
Each step of the \emph{$P$-restriction-then-induction} chain on irreducible
representations of the symmetric group $\sn$ goes as follows:
\begin{enumerate}
\item Choose a weak-composition $D=(d_{1},\dots,d_{l(D)})$ of $n$ with
probability $P(D)$.
\item Restrict the current irreducible representation to the chosen Young
subgroup $\mathfrak{S}_{d_{1}}\times\dots\times\mathfrak{S}_{d_{l(D)}}$.
\item Induce this representation to $\sn$, then pick an irreducible constituent
with probability proportional to the dimension of its isotypic component.
\end{enumerate}
\end{defn}
\cite{jasonlift} considered similar chains for subgroups of any group
$H\subseteq G$ instead of $\mathfrak{S}_{d_{1}}\times\dots\times\mathfrak{S}_{d_{l(D)}}\subseteq\sn$.

To calculate the common unique stationary distribution of all these
descent operator chains, using Theorem \ref{thm:descentop_stationarydistribution},
first note that, for $\lambda\vdash n$, 
\[
\mbox{coefficient of }\lambda\mbox{ in }(1)^{n}=|\{(\nu_{2},\dots,\nu_{n-1}):\nu_{2}\sim(1)\cup\square,v_{3}\sim\nu_{2}\cup\square,\dots,\lambda\sim\nu_{n-1}\cup\square\}|=\dim\lambda.
\]
To obtain $\pi(\lambda)$, multiply the above by $\frac{\eta(\lambda)}{n!}$.
As $\eta(\lambda)$ is also $\dim\lambda$, this means $\pi(\lambda)=\frac{(\dim\lambda)^{2}}{n!}$,
the Plancherel measure.

\subsection{II.B: Quotient Algebras and a Lift to Tableaux}

The following theorem is a specialisation of Theorem \ref{thm:stronglumping-linearmap},
about the strong lumping of Markov chains from linear maps, to the
case of descent operator chains.
\begin{thm}[Strong lumping for descent operator chains]
\label{thm:stronglumping-hopf} \cite[Th. 4.1]{cpmcfpsac} Let $\calh$,
$\barcalh$ be graded, connected Hopf algebras with bases $\calb$,
\textup{$\barcalb$} respectively, that both satisfy the conditions
in Theorem \ref{thm:descentoperatorchain}. If $\theta:\calh\rightarrow\barcalh$
is a Hopf-morphism such that $\theta(\calbn)=\barcalb_{n}$ for all
$n$, then the Markov chain on $\calbn$ which the Doob transform
fashions from the descent operator $m\circ\Delta_{P}$ lumps strongly
via $\theta$ to the Doob transform chain from the same operator on
$\barcalb_{n}$.\end{thm}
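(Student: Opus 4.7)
The plan is to apply Theorem \ref{thm:stronglumping-linearmap} directly, taking $V = \calhn$, $\barv = \barcalh_n$, $\T = \T_P$, and the given Hopf morphism $\theta$ as the quotient map. The state space basis axioms, together with the argument sketched just before the theorem statement, ensure the Doob transform hypotheses (non-negative matrix of $\T_P$ and positive $\eta$) on both $\calhn$ and $\barcalh_n$. Moreover, the hypothesis $\theta(\calbn) = \barcalb_n$ forces $\theta$ to act as a set-theoretic surjection from basis to basis in each degree; in particular the distinct elements of $\{\theta(x) : x \in \calbn\}$ are precisely the basis $\barcalb_n$, which is automatically linearly independent. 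It then remains to verify the two intertwining relations $\theta \T_P = \T_P \theta$ and $\bar\eta\, \theta = \eta$.

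For the first relation, I would argue by induction on the length $a$ of the composition $D = (d_1, \dots, d_a)$. Since $\theta$ is a graded Hopf morphism, coassociativity combined with $(\theta \otimes \theta) \Delta = \Delta \theta$ yields $\theta^{\otimes a} \Delta_D = \Delta_D \theta$; the analogous iterate of the product compatibility gives $m_a \theta^{\otimes a} = \theta m_a$, where $m_a$ denotes iterated multiplication of $a$ factors. Composing these produces $(m \Delta_D) \theta = \theta (m \Delta_D)$ for every weak-composition $D$, and then the $P$-weighted linear combination in the definition of $\T_P$ delivers $\theta \T_P = \T_P \theta$.

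For the second relation, I would specialise the previous identity to $D = (1, 1, \dots, 1)$ to obtain $\theta^{\otimes n} \Delta_{1,\dots,1}(x) = \Delta_{1,\dots,1}(\theta(x))$. The rescaling function $\eta(x)$ is defined as the sum of the $\calb_1^{\otimes n}$-coefficients of $\Delta_{1,\dots,1}(x)$. Because $\theta(\calb_1) = \barcalb_1$ as sets, $\theta^{\otimes n}$ sends each element of $\calb_1^{\otimes n}$ to a single element of $\barcalb_1^{\otimes n}$ with coefficient one, so this coefficient sum is preserved; hence $\bar\eta(\theta(x)) = \eta(x)$. I expect this to be the only genuine subtlety of the proof: ensuring $\eta$ descends requires more than just the algebraic Hopf-morphism property of $\theta$, and instead depends on $\theta$ respecting the bases set-theoretically, which is exactly what the hypothesis $\theta(\calbn) = \barcalb_n$ supplies in every degree. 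With both intertwining relations in hand, Theorem \ref{thm:stronglumping-linearmap} immediately yields the desired strong lumping.
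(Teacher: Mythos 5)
Your proposal is correct and follows essentially the same route as the paper: the paper also obtains this theorem by specialising Theorem \ref{thm:stronglumping-linearmap}, noting that a graded Hopf morphism satisfies $\theta\T_{P}=\T_{P}\theta$ and that $(\theta\otimes\dots\otimes\theta)\Delta_{1,\dots,1}=\Delta_{1,\dots,1}\theta$ forces $\bar{\eta}\theta=\eta$. Your extra care about $\theta(\calb_{1})=\barcalb_{1}$ preserving the coefficient sum that defines $\eta$ is exactly the point the paper compresses into ``which implies $\eta\theta=\eta$.''
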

\begin{proof}
Condition 1 of Theorem \ref{thm:descentoperatorchain} requires the
distinct images of $\calbn$ under $\theta$ to be linearly independent
- this is true here by hypothesis.

Condition 2 requires $\theta\circ(m\circ\Delta_{P})=(m\circ\Delta_{P})\circ\theta$,
and $\bar{\eta}\circ\theta=\eta$. The former is true because $\theta$
is a Hopf-morphism. To check the latter, apply both sides to an arbitrary
$x\in\calb_{n}$ and multiply by $\bar{\bullet}^{\otimes n}$, where
$\bar{\bullet}$ is the unique element of $\barcalb_{1}$; then the
condition required is equivalent to 
\begin{equation}
\bar{\eta}(\theta(x))\bar{\bullet}^{\otimes n}=\eta(x)\bar{\bullet}^{\otimes n}.\label{eq:stronglumpeta}
\end{equation}
The left hand side is $\Delta_{1,\dots,1}(\theta(x))$, by definition
of $\bar{\eta}$. Because $\theta$ is a Hopf-morphism, this is equal
to $(\theta\otimes\dots\otimes\theta)\circ\Delta_{1,\dots,1}(x)=(\theta\otimes\dots\otimes\theta)\left(\eta(x)\bullet^{\otimes n}\right)$,
by definition of $\eta$ (writing $\bullet$ for the unique element
of $\calb_{1}$). Hence this is $(\theta(\bullet)\otimes\dots\otimes\theta(\bullet))\eta(x)$.
Since $\theta(\calbn)=\barcalbn$ for all $n$, it is true for $n=1$,
whcih means $\theta(\bullet)\in\bar{\calb}_{1}$. Since $\bar{\calb}_{1}=\{\bar{\bullet}\}$,
it must be that $\theta(\bullet)=\bar{\bullet}$. Hence $(\theta(\bullet)\otimes\dots\otimes\theta(\bullet))\eta(x)=\eta(x)\bar{\bullet}^{\otimes n}$,
proving Equation \ref{eq:stronglumpeta}.\end{proof}
\begin{rem*}
Observe that the proof does not fully use the assumption $\theta(\calbn)=\barcalbn$
for all $n$ - all that is required is that $\theta(\calbn)=\barcalbn$
for the single value of $n$ of interest, and that $\theta(\calb_{1})=\barcalb_{1}$.
Indeed, if $\calbn$ can be partitioned into communication classes
$\calbn=\amalg_{i}\calbn^{(i)}$ for the $m\circ\Delta_{P}$ Markov
chain (i.e. it is impossible to move between distinct $\calbn^{(i)}$
using the $m\circ\Delta_{P}$ Markov chain), so there is effectively
a separate chain on each $\calbn^{(i)}$, then, to prove a lumping
for the chain on one $\calbn^{(i)}$, it suffices to require $\theta(x)\in\barcalbn$
only for $x\in\calbn^{(i)}$ (and $\theta(\calb_{1})=\barcalb_{1}$).
This will be useful in Section \ref{sub:descentsetlump}/II.E for
showing that cut-and-interleave shuffles of $n$ distinct cards lump
via descent set, as this lumping is false for non-distinct decks.
\end{rem*}

\subsubsection{Example: the Down-Up Chain on Standard Tableaux\label{sub:tableaux}}

To use Theorem \ref{thm:stronglumping-hopf} to lift the descent operator
chains on partitions of the previous section, we need a Hopf algebra
whose quotient is $\Lambda$, and the quotient map must come from
a map from the basis elements of the new, larger Hopf algebra to partitions
(or more accurately, to Schur functions). Below describes one such
algebra, the Poirer-Reutenauer Hopf algebra of standard tableaux.
It was christened $(\mathbb{Z}T,*,\delta)$ in \cite{fsym}, but we
follow \cite[Sec. 3.5]{sym6} and denote it by $\fsym$, for ``free
symmetric functions''. Its distinguished basis is $\{\mathbf{S}_{T}\}$,
where $T$ runs over the set of standard tableaux. As with partitions,
it will be convenient to write $T$ in place of $\mathbf{S}_{T}$.
This algebra is graded by the number of boxes in $T$. The quotient
map $\fsym\rightarrow\Lambda$ is essentially taking the shape of
the standard tableaux - the image of $\mathbf{S}_{T}$ in $\Lambda$
is $s_{\sh(T)}$.

Because the product and coproduct of $\fsym$ are fairly complicated,
involving Jeu de Taquin and other tableaux manipulations, we describe
here only $m:\calh_{n-1}\otimes\calh_{1}\rightarrow\calhn$ and $\Delta_{n-1,1}$,
and direct the interested reader to \cite[Sec. 5c, 5d]{fsym} for
details.

If $T$ is a standard tableaux with $n-1$ boxes, then the product
$m(T\otimes\tableau{1})$ is the sum of all ways to add a new box,
filled with $n$, to $T$. For example, \begin{equation*} m \left( \raisebox{1.5ex}{\tableau{ 1 & 2 \\ 3 \\ 4 } } \otimes  \raisebox{1.5ex}{\tableau{ 1 } } \right)= \raisebox{1.5ex}{ \tableau{ 1 & 2 & 5\\ 3 \\ 4 } } + \raisebox{1.5ex}{ \tableau{ 1 & 2 \\ 3 & 5 \\ 4 } } + \raisebox{1.5ex}{ \tableau{ 1 & 2 \\ 3 \\ 4 \\ 5} }. \end{equation*}

The coproduct $\Delta_{n-1,1}$ is ``unbump and standardise''. \cite[fourth paragraph of proof of Th. 7.11.5]{stanleyec2}
explains unbumping as follows: for a removable box $b$ in row $i$,
remove $b$, then find the box in row $i-1$ containing the largest
integer smaller than $b$. Call this filling $b_{1}$. Replace $b_{1}$
with $b$, then put $b_{1}$ in the box in row $i-2$ previously filled
with the largest integer smaller than $b_{1}$, and continue this
process up the rows. In the second term in the example below, these
displaced fillings are $4,3,2$. What unbumping achieves is this:
if $b$ was the last number to be inserted in an RSK insertion that
resulted in $T$, then unbumping $b$ from $T$ recovers the tableaux
before $b$ was inserted. The coproduct $\Delta_{n-1,1}(T)$ is the
sum of unbumpings over all removable boxes $b$ of $T$, then standardising
the unbumped tableaux, for example  \begin{align*} \Delta_{4,1} \left( \raisebox{1.5ex}{\tableau{ 1 & 2 & 5 \\ 3 \\ 4  } } \right) &  = \std \left(\raisebox{1.5ex}{ \tableau{ 1 & 2 \\ 3 \\ 4 } } \right) \otimes \tableau{ 1 }  + \std \left( \raisebox{1.5ex}{ \tableau{ 1 & 3 & 5 \\ 4 } }  \right) \otimes \tableau{ 1 }  \\ &  = \mbox{ \tableau{ 1 & 2 \\ 3 \\ 4 } }  \otimes \tableau{ 1 }   +  \mbox{ \tableau{ 1 & 2 & 4 \\ 3 } }   \otimes \tableau{ 1 } . \end{align*}

To describe the down-up chain on standard tableaux (i.e. the chain
which the Doob transform fashions from the map $\frac{1}{n}m\circ\Delta_{n-1,1}$),
it remains to calculate the rescaling function $\eta(T)$. This is
the coefficient of $\tableau{1}^{\otimes n}$ in $\Delta_{1,\dots,1}(T)$,
which the description of $\Delta_{n-1,1}$ above rephrases as the
number of ways to successively choose boxes to unbump from $T$. Such
ways are in bijection with the standard tableaux of the same shape
as $T$, so $\eta(T)=\dim(\sh T)$. Hence one step of the down-up
chain on standard tableaux, starting from a tableau $T$ of $n$ boxes,
has the following interpretation:
\begin{enumerate}
\item Pick a removable box $b$ of $T$ with probability $\frac{\dim(\sh(T\backslash b))}{\dim(\sh T)}$,
and unbump $b$. (As for partitions, one can pick $b$ using the hook
walk of \cite{hookwalk}.)
\item Standardise the remaining tableaux and call this $T'$.
\item Add a box labelled $n$ to $T'$, with probability $\frac{1}{n}\frac{\dim(\sh(T'\cup n))}{\dim(\sh T')}$.
(As for partitions, one can pick where to add this box using the complimentary
hook walk of \cite{hookwalk2}.)
\end{enumerate}
Here are a few steps of a possible trajectory in degree 5 (the red
marks the unbumping paths):

\noindent \begin{center} 
\begin{tikzpicture} 
\node (A) at (0,0) {\tableau{ \color{red}{1} & 3 \\ \color{red}{2} \\ \color{red}{4} \\ \color{red}{5} }}; 
\node (B) at (4,0)  {\tableau{ 1 & \color{red}{2} & 5 \\ \color{red}{3} \\ \color{red}{4}  }}; 
\node (C) at (8,0)  {\tableau{ 1 & 2 & \color{red}{4} \\ 3 & 5  }};  
\node (D) at (12,0) {\tableau{ 1 & 2  \\ 3 & 4 \\ 5 }}; 
\node (E) at (2,-2) {\tableau{ 1 & 2 \\ 3 \\ 4 }}; 
\node (F) at (6,-2)  {\tableau{ 1 & 2 & 4 \\ 3 }}; 
\node (G) at (10,-2)  {\tableau{ 1 & 2 \\ 3 & 4  }}; 
\draw[->] (A) -- (E); 
\draw[->] (E) -- (B); 
\draw[->] (B) -- (F); 
\draw[->] (F) -- (C); 
\draw[->] (C) -- (G); 
\draw[->] (G) -- (D); 
\end{tikzpicture}
\par\end{center}

The transition matrix of this chain in degree 3 is\[ \arraycolsep=15pt \def\arraystretch{2} \begin{array}{c|cccc}  & \tableau{1 & 2 & 3} & \tableau{1 & 2 \\ 3} & \tableau{1 & 3 \\ 2} & \tableau{1 \\ 2 \\ 3}\\ \hline \tableau{1 & 2 & 3} & \frac{1}{3} & \frac{2}{3} & 0 & 0\\ \tableau{1 & 2 \\ 3} & \frac{1}{6} & \frac{1}{3} & \frac{1}{3} & \frac{1}{6}\\ \tableau{1 & 3 \\ 2} & \frac{1}{6} & \frac{1}{3} & \frac{1}{3} & \frac{1}{6}\\ \tableau{1 \\ 2 \\ 3} &  0&  0& \frac{2}{3} & \frac{1}{3} \end{array}. \]

According to Proposition \ref{thm:descentop_stationarydistribution},
the unique stationary distribution of the down-up chain on tableau
is $\pi(T)=\frac{1}{n!}\eta(T)\times$coefficient of $T$ in $\tableau{1}^{n}$.
Note that there is a unique way of adding outer boxes filled with
$1,2,\dots$ in succession to build a given tableau $T$, so each
tableau of $n$ boxes appears precisely once in the product $\tableau{1}^{n}$.
Hence $\pi(T)=\frac{1}{n!}\eta(T)=\frac{1}{n!}\dim(\sh T)$.

As explained at the beginning of this subsection, the symmetric functions
$\Lambda$ is a quotient of $\fsym$ \cite[Th. 4.3.i]{fsym}, and
the quotient map sends $\mathbf{S}_{T}$ to $s_{\sh(T)}$. Applying
Theorem \ref{thm:stronglumping-hopf} then gives:
\begin{thm}
\label{thm:tableauxtopartition} The down-up Markov chain on standard
tableaux lumps to the down-up Markov chain on partitions via taking
the shape. \qed
\end{thm}
By the same argument, the $P$-restriction-then-induction chains on
partitions, for any probability distribution $P$, lift to $m\circ\Delta_{P}$
chains on tableaux, but these are hard to describe.

\subsection{II.C: Subalgebras and a Lift to Permutations}

The following theorem is a specialisation of Theorem \ref{thm:weaklumping-linearmap},
about the weak lumping of Markov chains from linear maps, to the case
of descent operator chains.
\begin{thm}[Weak lumping for descent operator chains]
\label{thm:weaklumping-hopf} Let $\calh$, $\calh'$ be graded,
connected Hopf algebras with state space bases $\calb$, \textup{$\calb'$}
respectively, that both satisfy the conditions in Theorem \ref{thm:descentoperatorchain}.
Suppose for all $n$ that $\theta:\calbn\rightarrow\calbn'$ is such
that the ``preimage sum'' map $\theta^{*}:\calbn'\rightarrow\calhn$,
defined by $\theta^{*}(x'):=\sum_{x\in\calb,\theta(x)=x'}x$, extends
to a Hopf-morphism. Then the Markov chain on $\calbn$ which the Doob
transform fashions from the descent operator $m\circ\Delta_{P}$ lumps
weakly via $\theta$ to the Doob transform chain from the same map
on $\calbn'$, from any starting distribution $X_{0}$ where $\frac{X_{0}(x)}{\eta(x)}=\frac{X_{0}(y)}{\eta(y)}$
whenever $\theta(x)=\theta(y)$.\end{thm}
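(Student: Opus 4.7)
The plan is to derive this statement from Theorem \ref{thm:weaklumping-linearmap}, taking as the partition of $\calbn$ the fibers of $\theta$. For each $x'\in\calbn'$, set $\calb^{x'}:=\{x\in\calbn:\theta(x)=x'\}$; then the sum $x^{x'}:=\sum_{x\in\calb^{x'}}x$ equals $\theta^{*}(x')$ by definition of $\theta^{*}$. Hence the subspace $V'\subseteq\calhn$ spanned by $\{x^{x'}:x'\in\calbn'\}$ is exactly the image of $\theta^{*}$ restricted to $\calh_{n}'$, which is a graded Hopf subalgebra of $\calh$ because $\theta^{*}$ extends to a Hopf morphism.

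Since $\T_{P}$ is built from the product $m$ and iterated projected coproducts $\Delta_{D}$, it preserves every graded Hopf subalgebra; in particular $\T_{P}(V')\subseteq V'$, which is the invariance hypothesis required by Theorem \ref{thm:weaklumping-linearmap}. Moreover, $\theta^{*}$ intertwines the two descent operators, $\T_{P}\circ\theta^{*}=\theta^{*}\circ\T_{P}$ (on the right, $\T_{P}$ denotes the analogous map on $\calh'$), and commutes with $\Delta_{1,\dots,1}$. Consequently, the Doob transform chain on $V'$ in the basis $\{x^{x'}\}$, with the restriction of $\eta$ as its rescaling function, agrees under the bijection $x'\leftrightarrow x^{x'}$ with the Doob transform chain on $\calbn'$ coming from $\T_{P}$ and $\eta$ on $\calh'$; in particular $\eta(x^{x'})=\eta(x')$, since $\theta^{*}$ sends $\calh_{1}'$ into $\calh_{1}$ and therefore preserves the total $\calb_{1}\otimes\cdots\otimes\calb_{1}$ coefficient computed by $\Delta_{1,\dots,1}$.

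With these identifications in hand, Theorem \ref{thm:weaklumping-linearmap} yields the weak lumping. Its admissibility condition on the starting distribution, $P\{X_{0}=x\}=\alpha_{\theta(x)}\eta(x)/\eta(x^{\theta(x)})$, unwinds via $\eta(x^{\theta(x)})=\eta(\theta(x))$ to the stated requirement that $X_{0}(x)/\eta(x)$ depend only on $\theta(x)$. The main obstacle I anticipate is the careful verification of the intertwining $\T_{P}\theta^{*}=\theta^{*}\T_{P}$ and the matching of the rescaling functions; both reduce to $\theta^{*}$ being a graded Hopf morphism together with the fact that $\T_{P}$ and $\eta$ are defined purely in terms of $m$, $\Delta_{D}$, and $\Delta_{1,\dots,1}$, so the check is routine but needs to be performed explicitly.
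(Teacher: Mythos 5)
Your proposal is correct and matches the paper's own argument: the paper proves this theorem by exactly this route, namely applying Theorem \ref{thm:weaklumping-linearmap} to the fibers of $\theta$, using that the image of the Hopf morphism $\theta^{*}$ is a sub-Hopf-algebra and hence invariant under every descent operator, with the rescaling functions matching because $\theta^{*}$ commutes with $\Delta_{1,\dots,1}$ and identifies the degree-one basis elements. The details you flag as ``routine but needing explicit verification'' are precisely the ones the paper carried out in the special case of $\fsym\hookrightarrow\fqsym$ at the end of Section \ref{sec:permutations} and then declared to generalise.
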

\begin{proof}
First observe that $\theta^{*}$ sends $\calbn'$ to a linearly independent
set in $\calhn$, so $\theta^{*}:\calh'\rightarrow\calh$ is injective,
hence it is legal to identify $x'\in\calh'$ with $\sum_{x\in\calb,\theta(x)=x'}x$
and view $\calh'$ as a Hopf subalgebra of $\calh$. So $\calh'$
is an invariant subspace of $\calh$ under $m\circ\Delta_{P}$. The
other requirements of Theorem \ref{thm:weaklumping-linearmap} are
that each element in the basis $\calbn'$ should be a sum over disjoint
subsets of $\calbn$, which is true by hypothesis; and that the restriction
of the rescaling function $\eta:\calhn\rightarrow\mathbb{R}$ to $\calbn'$
is the natural rescaling function for $\calhn'$. The definition of
$\eta'$ is that, for all $x\in\calbn'$, it holds that $\Delta_{1,\dots,1}(x')=\eta'(x')\bullet'^{\otimes n}$,
where $\bullet'$ is the unique element of $\calb_{1}'$. Since $\theta$
sends $\calb_{1}$ to $\calb_{1}'$, it must be true that $\bullet'=\theta(\bullet)$,
i.e. $\bullet=\theta^{*}(\bullet')$, or $\bullet=\bullet'$ when
viewing $\calh'$ as a subalgebra of $\calh$. Hence $\Delta_{1,\dots,1}(x')=\eta'(x')\bullet{}^{\otimes n}$.
The left hand side is 
\[
\Delta_{1,\dots,1}\left(\sum_{x\in\calb,\theta(x)=x'}x\right)=\sum_{x\in\calb,\theta(x)=x'}\Delta_{1,\dots,1}(x)=\sum_{x\in\calb,\theta(x)=x'}\eta(x)\bullet^{\otimes n}=\eta(x')\bullet^{\otimes n},
\]
where the second equality uses the definition of $\eta$, and the
third equality uses the linearity of $\eta$. Hence $\eta'(x')=\eta(x')$.
\end{proof}

\subsubsection{Example: the Down-Up Chain on Permutations\label{sub:permutations}}

Recall that the previous section lifted the down-up chain on partitions
to the down-up chain on tableaux, using that the symmetric functions
$\Lambda$ is a quotient of the Hopf algebra $\fsym$ of tableaux.
To further lift this chain to permutations using Theorem \ref{thm:weaklumping-hopf},
we express $\fsym$ as a subalgebra of a Hopf algebra of permutations,
namely the Malvenuto-Reutenauer algebra.

Before a full description of this Hopf structure on permutations,
here is an intuitive interpretation of its down-up chain, from $\frac{1}{n}m\circ\Delta_{n-1,1}$.
(This is a mild variant of \cite[Ex. 1.2]{descentoperatorchains},
which is for $\frac{1}{n}m\circ\Delta_{1,n-1}$.) You keep an electronic
to-do list of $n$ tasks. Each day, you complete the task at the top
of the list, and are handed a new task, which you add to the list
in a position depending on its urgency (more urgent tasks are placed
closer to the top). Assume the incoming tasks are equally distributed
in urgency, so they are each inserted into the list in a uniform position.
To assign a permutation (in one-line notation) to each daily list,
first write $i$ for the task received on day $i$ (for $i\geq n$),
then standardise, and read these numbers from the bottom of the list
to the top. (After the standardisation, the numbers will indicate
the relative time that each task has spent on the list: $1$ denotes
the task that's been on the list for the longest time, $2$ for the
next oldest task, and so on, so $n$ denotes the task you received
today.) The diagram below shows one possibility over four days of
both the original task numbering before standardisation (vertically)
and the Markov chain on permutations from the standardised task numbering
(horizontally):

\def\arraystretch{1} \noindent \begin{center}
\begin{tikzpicture} 
\node (X) at (-1.3, -1.3) {original task}; 
\node (Y) at (-1.3, -1.6) {numbering};
\node (A) at (0,0) {$\begin{matrix}1\\3\\4\\2\\5\end{matrix}$}; 
\node (B) at (4,0)  {$\begin{matrix}3\\6\\4\\2\\5\end{matrix}$}; 
\node (C) at (8,0)  {$\begin{matrix}6\\4\\7\\2\\5\end{matrix}$};  
\node (D) at (12,0) {$\begin{matrix}4\\7\\2\\5\\8\end{matrix}$}; 
\node (E) at (2,-2) {$\begin{matrix}3\\4\\2\\5\end{matrix}$}; 
\node (F) at (6,-2)  {$\begin{matrix}6\\4\\2\\5\end{matrix}$}; 
\node (G) at (10,-2)  {$\begin{matrix}4\\7\\2\\5\end{matrix}$}; 
\draw[->] (A) -- (E); 
\draw[->] (E) -- (B); 
\draw[->] (B) -- (F); 
\draw[->] (F) -- (C); 
\draw[->] (C) -- (G); 
\draw[->] (G) -- (D); 
\end{tikzpicture}
\begin{tikzpicture} 
\node (X) at (-1.3, -1.3) {Markov chain}; 
\node (Y) at (-1.3, -1.6) {on permutations};
\node (A) at (0,0) {$(5,2,4,3,1)$}; 
\node (B) at (4,0)  {$(4,1,3,5,2)$}; 
\node (C) at (8,0)  {$(3,1,5,2,4)$};  
\node (D) at (12,0) {$(5,3,1,4,2)$}; 
\node (E) at (2,-2) {$(4,1,3,2)$}; 
\node (F) at (6,-2)  {$(3,1,2,4)$}; 
\node (G) at (10,-2)  {$(3,1,4,2)$}; 
\draw[->] (A) -- (E); 
\draw[->] (E) -- (B); 
\draw[->] (B) -- (F); 
\draw[->] (F) -- (C); 
\draw[->] (C) -- (G); 
\draw[->] (G) -- (D); 
\end{tikzpicture}
\par\end{center}

The same chain arises from performing the top-to-random shuffle and
keeping track of the relative last times that the cards were last
touched, instead of their values.

As mentioned above, this is the chain associated to $\frac{1}{n}m\circ\Delta_{n-1,1}$
on the Malvenuto-Reutenauer Hopf algebra of permutations, denoted
$(\mathbb{Z}S,*,\Delta)$ in \cite[Sec. 3]{fqsym}, $(\mathbb{Z}S,*,\delta)$
in \cite{fsym}, and $\mathfrak{S}Sym$ in \cite{fqsymstructure}.
We follow the recent Parisian literature, such as \cite{sym6}, and
call this algebra $\fqsym$, for ``free quasisymmetric functions''. 

The basis of concern here is the fundamental basis $\{\mathbf{F}_{\sigma}\}$,
as $\sigma$ ranges over all permutations (of any length). As in the
previous sections, we often write $\mathbf{F}_{\sigma}$ simply as
$\sigma$. The degree of $\sigma$ is its length when considered as
a word. 

We explain the Hopf structure on $\fqsym$ by example. The product
$\sigma_{1}\sigma_{2}$ is $\sigma_{1}\shuffle\sigma_{2}[\deg\sigma_{1}]$
, the sum of all ``interleavings'' or ``shuffles'' of $\sigma_{1}$
with the shift of $\sigma_{2}$ by $\deg(\sigma_{1})$: 
\begin{align*}
(3,1,2)(2,1) & =(3,1,2)\shuffle(5,4)\\
 & =(3,1,2,5,4)+(3,1,5,2,4)+(3,1,5,4,2)+(3,5,1,2,4)+(3,5,1,4,2)\\
 & \hphantom{=}+(3,5,4,1,2)+(5,3,1,2,4)+(5,3,1,4,2)+(5,3,4,1,2)+(5,4,3,1,2).
\end{align*}

The coproduct is ``deconcatenate and standardise'': 
\[
\Delta(\sigma)=\sum_{\sigma_{1}\cdot\sigma_{2}=\sigma}\std(\sigma_{1})\otimes\std(\sigma_{2}),
\]
where $\cdot$ denotes concatenation. Thus
\begin{align*}
 & \hphantom{=}\Delta(4,1,3,2)\\
 & =()\otimes(4,1,3,2)+\std(4)\otimes\std(1,3,2)+\std(4,1)\otimes\std(3,2)+\std(4,1,3)\otimes\std(2)+(4,1,3,2)\otimes()\\
 & =()\otimes(4,1,3,2)+(1)\otimes(1,3,2)+(2,1)\otimes(2,1)+(3,1,2)\otimes(1)+(4,1,3,2)\otimes().
\end{align*}

Recall that we are primarily interested in $\frac{1}{n}m\circ\Delta_{n-1,1}$.
Note that $\Delta_{n-1,1}$ removes the last letter of the word and
standardises the result, whilst right-multiplication by $(1)$ yields
the sum of all ways to insert the letter $n$. Since $\Delta_{n-1,1}(\sigma)$
contains only one term, we see inductively that the rescaling function
is $\eta(\sigma)\equiv1$. Hence one step of the down-up chain on
$\fqsym$, starting at $\sigma\in\sn$, has the following description:
\begin{enumerate}
\item Remove the last letter of $\sigma$.
\item Standardise the remaining word.
\item Insert the letter $n$ into this standardised word, in a uniformly
chosen position.
\end{enumerate}
The transition matrix of this chain in degree 3 is (all empty entries
are 0) \[  \def\arraystretch{1.2} \begin{array}{c|cccccc}  & (1,2,3) & (1,3,2) & (3,1,2) & (2,3,1) & (2,1,3) & (3,2,1)\\ \hline (1,2,3) & \frac{1}{3} & \frac{1}{3} & \frac{1}{3}\\ (1,3,2) & \frac{1}{3} & \frac{1}{3} & \frac{1}{3}\\ (3,1,2) &  &  &  & \frac{1}{3} & \frac{1}{3} & \frac{1}{3}\\ (2,3,1) & \frac{1}{3} & \frac{1}{3} & \frac{1}{3}\\ (2,1,3) &  &  &  & \frac{1}{3} & \frac{1}{3} & \frac{1}{3}\\ (3,2,1) &  &  &  & \frac{1}{3} & \frac{1}{3} & \frac{1}{3} \end{array}. \]

According to Proposition \ref{thm:descentop_stationarydistribution},
the unique stationary distribution of this chain is $\pi(\sigma)=\frac{1}{n!}\eta(\sigma)\times$coefficient
of $\sigma$ in $(1)^{n}$. Since there is a unique way of inserting
the letters $1,2,\dots$ in that order to obtain a given permutation,
each permutation of length $n$ appears precisely once in the product
$(1)^{n}$. Hence $\pi(\sigma)\equiv\frac{1}{n!}$.

Recall that the point of discussing this chain is that it is a weak
lift of the down-up chain on tableaux of the previous section, if
in the initial distribution any two permutations having the same RSK
insertion tableau are equally probable (such permutations are said
to belong to the same \emph{plactic class}). Let $\RSK$ denote the
map sending a permutation to its insertion tableau under the Robinson-Schensted-Knuth
algorithm (this tableau is often called $P$). \cite[Th. 4.3.iii]{fsym}
shows that $\fsym$ is a subalgebra of $\fqsym$ under the injection
$\theta^{*}(\mathbf{S}_{T}):=\sum_{\RSK(\sigma)=T}\mathbf{F}_{\sigma}$,
so Theorem \ref{thm:weaklumping-linearmap} applies. (Note that the
rescaling function $\eta(T)=\dim(\sh T)$ is indeed the restriction
of $\eta(\sigma)\equiv1$, since the number of terms $\mathbf{F}_{\sigma}$
in the image of $\mathbf{S}_{T}$ is $\dim(\sh T)$.) Thus
\begin{thm}
\label{thm:permutationtotableaux} The down-up Markov chain on permutations
lumps weakly to the down-up Markov chain on tableaux via taking RSK
insertion tableau, whenever the initial distribution is constant on
plactic classes. \qed
\end{thm}
This lumping can be ``concatentated'' with the lumping of Theorem
\ref{thm:tableauxtopartition} from tableaux to partitions. Thus the
down-up chain on permutations lumps weakly to the down-up chain on
partitions via taking the shape of the RSK insertion tableau, whenever
the initial distribution is constant on plactic classes. By the same
reasoning, this is true for chains from any descent operator $m\circ\Delta_{P}$.
We call such chains on permutations the $P$-shuffles-with-standardisation.
\begin{defn}
\label{def:shufflewithstandardisation} Fix an integer $n$, and let
$P(D)$ be a probability distribution on the weak-compositions of
$n$. Each step of the \emph{$P$-shuffle-with-standardisation} Markov
chain on the permutations $\sn$ (viewed in one-line notation) goes
as follows:
\begin{enumerate}
\item Choose a weak-composition $D$ of $n$ with probability $P(D)$.
\item Deconcatenate the current permutation into a word $w_{1}$ of the
first $d_{1}$ letters, $w_{2}$ of the next $d_{2}$ letters, and
so on. 
\item Replace the smallest letter in $w_{1}$ by 1, the next smallest by
2, and so on. Then replace the smallest letter in $w_{2}$ by $d_{1}+1$,
the next smallest by $d_{1}+2$, and so on for all $w_{i}$.
\item Interleave these words uniformly (i.e. uniformly choose a permutation
where the letters $1,2,\dots,d_{1}$ are in the same relative order
as in the replaced $w_{1}$, where $d_{1}+1,d_{1}+2,\dots,d_{1}+d_{2}$
are in the same relative order as in the replaced $w_{2}$, etc.).
\end{enumerate}
\end{defn}
As discussed in the previous four paragraphs and at the end of Section
II.B, the Hopf-morphisms $\fsym\hookrightarrow\fqsym$ and $\fsym\twoheadrightarrow\Lambda$
prove that 
\begin{thm}
\label{thm:cpplift} Fix an integer $n$, and let $P(D)$ be a probability
distribution on the weak-compositions of $n$. The $P$-shuffle-with-standardisation
chain on the permutations $\sn$ lumps weakly to the $P$-restriction-then-induction
chain on partitions, via taking the shape of the RSK insertion tableau,
whenever the initial distribution is constant on plactic classes.
\qed
\end{thm}
The next section deduces from this theorem a result of Fulman \cite[Th. 3.1]{jasonlift},
that the probability of obtaining a partition $\lambda$ after $t$
steps of $P$-restriction-then-induction, starting from the partition
with a single part, is the probability that the RSK shape of a deck
is $\lambda$ after $t$ iterations, starting from the identity, of
a $P$ analogue of the top-to-random shuffle.

\subsubsection{Other lumpings of $P$-shuffles-with-standardisation, strong and
weak\label{sub:fqsymlump}}

Return to the weak lumping of permutations to tableaux via RSK insertion
(i.e. ignore the second lumping to partitions). The $P$-shuffle-with-standardisation
chain has many weak lumpings in this style, thanks to the general
construction in \cite[Th. 31]{hivertcspolynomialrealisation}\cite{jbppolyrealisation}
of Hopf subalgebras of $\fqsym$ spanned by $\sum_{\theta(\sigma)=T}\mathbf{F}_{\sigma}$,
for various functions $\theta$. The criteria on $\theta:\amalg_{n}\sn\rightarrow\mathcal{C}$
(the codomain $\mathcal{C}$ can be any graded set) is that its extension
to all words with distinct letters, defined via $\theta(\sigma)=\theta(\std(\sigma))$,
should be compatible with concatenation and alphabet restriction in
the following sense:
\begin{enumerate}
\item if $\theta(\sigma_{1})=\theta(\tau_{1})$ and $\theta(\sigma_{2})=\theta(\tau_{2})$
then $\theta(\sigma_{1}\cdot\sigma_{2})=\theta(\tau_{1}\cdot\tau_{2})$;
\item if $\theta(\sigma)=\theta(\tau)$, and $\sigma_{\leftarrow r}$ (resp.
$\sigma_{r\rightarrow}$) contains the letters $1,2,\dots,r$ (resp.
$r+1,\dots,n$) in the same order as in $\sigma$, and similarly for
$\tau_{\leftarrow r}$ and $\tau_{r\rightarrow}$, then $\theta(\sigma_{\leftarrow r})=\theta(\tau_{\leftarrow r})$
and $\theta(\sigma_{r\rightarrow})=\theta(\tau_{r\rightarrow})$.
\end{enumerate}
Many such $\theta$ can be expressed in terms of insertion algorithms
similar to RSK. For example, taking $\theta$ to be binary tree insertion
\cite[Algo. 17]{hivertcspolynomialrealisation} generates the Loday-Ronco
Hopf algebra \cite{lodayroncotrees}. Thus the $P$-shuffle-with-standardisation
chain lumps weakly to a chain on binary trees, and \cite{baxter}
gives a variant with twin binary trees. Another example is the rising
sequence lengths, also known as the recoil or idescent set: $\theta(\sigma)=\Des(\sigma^{-1})$,
associated to the hypoplactic insertion of \cite[Sec. 4.8]{hypoplacticinsertion}.
Thus the $P$-shuffle-with-standardisation chain lumps weakly via
the set of rising sequence lengths.

The dual of this general construction creates quotient algebras of
the dual algebra $\fqsym^{*}$, which is isomorphic to $\fqsym$ via
inversion of permutations. These quotients satisfy the strong lumping
criterion of Theorem \ref{thm:stronglumping-hopf}, so the $P$-shuffles-with-standardisation
lump (strongly) via $\sigma\mapsto\theta(\sigma^{-1})$ for any $\theta$
satisfying the conditions above. Examples of such strong lumping maps
include RSK recording tableau, decreasing binary tree (the map $\lambda$
of \cite{lrstructure}, the ``recording'' part of the binary tree
algorithm), and descent set.

To see another example and non-example of lumpings induced from Hopf-morphisms,
consider the following commutative diagram from \cite[Th. 4.3]{fsym}:\noindent \begin{center}
\begin{tikzpicture} 
\node (A) at (0,0) {$\fqsym$, permutations};  
\node (B) at (7,0)  {$QSym$, compositions};  
\node (C) at (0,-3)  {$\fsym$, standard tableaux};   
\node (D) at (7,-3) {$\Lambda$, partitions}; 
\draw[->>] (A) -- (B) node [above, pos= 0.5] {$\Des$}; 
\draw[right hook->] (C) -- (A); 
\draw[->>] (C) -- (D) node [above, pos= 0.5] {$\sh$}; 
\draw[right hook->] (D) -- (B); 
\end{tikzpicture}
\par\end{center}The main example in Sections II.A-II.C lifts a chain on partitions
to a chain on permutations via a chain on standard tableaux, on the
bottom left. Let us see why it is not possible to construct a lift
via compositions, on the top right, instead. The corresponding Hopf
algebra here is the algebra of quasisymmetric functions \cite{qsym}.

There is no problem with the top Hopf-morphism, which sends a permutation
$\mathbf{F}_{\sigma}$ to the fundamental quasisymmetric function
$F_{\Des(\sigma)}$ associated with its descent composition. Since
this map sends a basis of $\fqsym$ to a basis of $QSym$, Theorem
\ref{thm:stronglumping-hopf} applies and $P$-shuffles-with-standardisation
lump strongly via descent composition.

The problem is with the Hopf-morphism on the right - this inclusion
is not induced from a set map from compositions to partitions. Theorem
\ref{thm:weaklumping-hopf} only applies if $s_{\lambda}=\sum_{\theta(I)=\lambda}F_{I}$
for some function $\theta$ sending compositions to partitions. This
condition does not hold, as the same $F_{I}$ can occur in the expansion
of multiple Schur functions: $s_{(3,1)}=F_{(1,3)}+F_{(2,2)}+F_{(3,1)}$,
$s_{(2,2)}=F_{(1,2,1)}+F_{(2,2)}$.

\subsection{II.D: Equidistribution of Shuffles from the Identity, with and without
standardisation\label{sec:startatid}}

In this section, we relate the $P$-shuffles-with-standardisation
of Definition \ref{def:shufflewithstandardisation}, to the ``cut-and-interleave''
shuffles of \cite{cppriffleshuffle}, which we call here \emph{$P$-shuffles-without-standardisation}
for clarity.
\begin{defn}
Fix an integer $n$, and let $P(D)$ be a probability distribution
on the weak-compositions of $n$. Each step of the \emph{$P$-shuffle-without-standardisation}
Markov chain on the permutations $\sn$ (viewed in one-line notation)
goes as follows:
\begin{enumerate}
\item Choose a weak-composition $D$ of $n$ with probability $P(D)$.
\item Deconcatenate the current permutation into a word $w_{1}$ of the
first $d_{1}$ letters, $w_{2}$ of the next $d_{2}$ letters, and
so on. 
\item Interleave these words uniformly, i.e. uniformly choose a permutation
where the letters of each $w_{i}$ stay in the same relative order.
\end{enumerate}
\end{defn}
This chain comes from the descent operators $m\circ\Delta_{P}$ applied
to the shuffle algebra of Ree \cite{cpmcfpsac}.

In \cite{jasonlift}, Fulman showed
\begin{thm}
\label{thm:rskshuffle} \cite[Th. 3.1]{jasonlift} The probability
of obtaining any partition as the RSK shape after $P$-shuffling-without-standardisation
$t$ times, starting from the identity permutation, is equal to its
probability under $t$ steps of $P$-restriction-then-induction, started
at the trivial representation (i.e. the partition with a single row).
\end{thm}
Note that the result is only about the probabilities after all $t$
steps of the chains; nothing can be deduced about the probabilities
at intermediate times. In particular, it does not assert that $P$-shuffles-without-standardisation
lumps, strongly or weakly, to $P$-restriction-then-induction; this
is false, see the discussion in the second paragraph of the introduction
to Section 3/Part II. 

Fulman remarked that this connection is ``surprising'' and ``quite
mysterious'', and perhaps a more enlightening proof is to combine
Theorem \ref{thm:cpplift} with the following.
\begin{thm}
\label{thm:multistepprobofshuffles} The distribution on permutations
after $t$ iterates of $P$-shuffles-with-standardisation is the same
as that after $t$ iterates of $P$-shuffles-without-standardisation,
if both are started from the identity permutation.
\end{thm}
The power of this theorem goes much beyond reproving Fulman's ``almost
lift'': it allows results about either type of $P$-shuffle to apply
to the other type. For example, \cite[Ex. 5.8]{hopfpowerchains} showed
that the expected number of descents after $t$ riffle-shuffles of
$n$ cards, starting at the identity, is $\left(1-2^{-t}\right)\frac{n-1}{2}$,
so this must also be the expected number of descents after $t$ riffle-shuffles-with-standardisation
starting at the identity. In the other direction, \cite[Sec. 6]{descentoperatorchains}
made the simple observation that, if one tracks only the relative
orders of the bottom $k$ cards under top-to-random-with-standardisation,
then one sees a lazy version of top-to-random-with-standardisation
on $k$ cards, lazy meaning that at each time step no move is made
with probability $\frac{n-k}{n}$. (This is a strong lumping, but
not from Hopf algebras.) Thus the distribution of the relative orders
of the bottom $k$ cards after $t$ iterates of top-to-random-without-standardisation
from the identity is the distribution after $t$ iterates of a lazy
version of top-to-random-without-standardisation on $k$ cards.

Another use of Theorem \ref{thm:multistepprobofshuffles} is to obtain
many analogues of Theorem \ref{thm:rskshuffle}, with various statistics
in place of the RSK shape. This is because $P$-shuffles-with-standardisation
is associated to $\fqsym$, which has many subquotients as noted in
Section \ref{sub:fqsymlump}/the end of Section II.C, and hence has
many strong and weak lumpings. Thus the probability distribution of
the binary search tree, the decreasing tree, the rising sequence lengths,
the descent set, and other statistics after $t$ iterates of $P$-shuffles-without-standardisation
can all be calculated from $m\circ\Delta_{P}$ Markov chains on the
statistics themselves. (As Section \ref{sub:descentsetlump}/II.E
will explain, the descent set is actually a Markov statistic of $P$-shuffling-without-standardisation.)
\begin{proof}[Proof of Theorem \ref{thm:multistepprobofshuffles}]
 The case of $t=1$ is clear, since performing a single $P$-shuffle-with-standardisation,
starting from the identity permutation, does not actually require
any standardisation. The key to showing this result for larger $t$
is to express $t$ iterates of a $P$-shuffle, with or without standardisation,
as the same $P''$-shuffle, performed only once. This uses the (vector
space) isomorphism identifying the descent operator $m\circ\Delta_{D}$
with the homogeneous noncommutative symmetric function $S^{D}$ \cite[Sec. 2.5]{descentoperatorchains}.
(See \cite{ncsym} for background on noncommutative symmetric functions.)
Write $S^{P}$ for the noncommutative symmetric function associated
to $m\circ\Delta_{P}$.

On a commutative Hopf algebra, such as the shuffle algebra, the composition
$\left(m\circ\Delta_{P}\right)\circ\left(m\circ\Delta_{P'}\right)$
corresponds to the internal product of noncommutative symmetric functions
$S^{P'}S^{P}$ \cite[Th. II.7]{descentoperators}. So $t$ iterates
of a $P$-shuffle-without-standardisation correspond to $\left(S^{P}\right)^{t}$.

Now consider the $P$-shuffles-with-standardisation, on the algebra
$\fqsym$. Note that the coproduct of the identity permutation is
\begin{equation}
\Delta(1,\cdots,n)=\sum_{r=0}^{n}(1,\cdots,r)\otimes(1,\dots,n-r),\label{eq:fqsymidcoprod}
\end{equation}
and each tensor-factor is an identity permutation of shorter length.
Thus the subalgebra of $\fqsym$ generated by identity permutations
of varying length is closed under coproduct, and $P$-shuffling-with-standardisation
from the identity stays within this sub-Hopf-algebra. Equation \ref{eq:fqsymidcoprod}
shows that this sub-Hopf-algebra is cocommutative, so a composition
of descent operators $\left(m\circ\Delta_{P}\right)\circ\left(m\circ\Delta_{P'}\right)$
corresponds to the internal product of noncommutative symmetric functions
$S^{P}S^{P'}$ \cite[Th. II.7]{descentoperators}. Despite this product
being in the opposite order from the shuffle algebra case, $t$ iterates
of $P$-shuffles-with-standardisation are also described by $\left(S^{P}\right)^{t}$.
\end{proof}
It would be interesting to find a bijective proof of this equidistribution
after $t$ steps, i.e. to find a bijection between trajectories, starting
at the identity, under $P$-shuffling with and without standardisation
that have the same endpoint. Since the products of the associated
noncommutative symmetric functions are in opposite orders for the
two chains, such a bijection should probably be ``order reversing''
in some way. For example, the trajectory below of top-to-random-without-standardisation
comes from inserting first in position 4, and then in position 2.

\noindent \begin{center}
\begin{tikzpicture} 
\node (A) at (0,0) {$(1,2,3,4,5)$}; 
\node (B) at (4,0)  {$(2,3,4,1,5)$}; 
\node (C) at (8,0)  {$(3,2,4,1,5)$};  
\node (E) at (2,-2) {$(2,3,4,5)$}; 
\node (F) at (6,-2)  {$(3,4,1,5)$}; 
\draw[->] (A) -- (E); 
\draw[->] (E) -- (B); 
\draw[->] (B) -- (F); 
\draw[->] (F) -- (C); 
\end{tikzpicture}
\par\end{center}

A possible trajectory of top-to-random-with-standardisation that has
the same endpoint is to insert first in position $2+1=3$ and then
in position 4:

\noindent \begin{center}
\begin{tikzpicture} 
\node (A) at (0,0) {$(1,2,3,4,5)$}; 
\node (B) at (4,0)  {$(2,3,1,4,5)$}; 
\node (C) at (8,0)  {$(3,2,4,1,5)$};  
\node (E) at (2,-2) {$(1,2,3,4)$}; 
\node (F) at (6,-2)  {$(2,1,3,4)$}; 
\draw[->] (A) -- (E); 
\draw[->] (E) -- (B); 
\draw[->] (B) -- (F); 
\draw[->] (F) -- (C); 
\end{tikzpicture}
\par\end{center}

Here, the ``order reversal'' makes intuitive sense, because when
there is standardisation, 1 marks the most recently moved card, whereas
in the shuffles without standardisation, 1 is the first card moved.

To close this section, here are some similarities and differences
between the $P$-shuffles with and without standardisation. Since
both are descent operator chains, by \cite[Th. 3.5]{descentoperatorchains}
they have the same eigenvalues, but different multiplicities. (Strictly
speaking, the case without standardisation requires a multigraded
version of this theorem.) For example, the eigenvalues for both top-to-random
chains are $\frac{j}{n}$ for $j=0,1,2,\dots,n-3,n-2,n$. Their multiplicities
for the without-standardisation chain are the number of permutations
with $j$ fixed points \cite[Th. 4.1]{cppriffleshuffle}\cite[Sec. 4.6]{oneminuse},
whilst, for the chains with standardisation, they are the number of
permutations fixing $1,2,\dots,j$ but not $j+1$ (see Theorem \ref{thm:downup_evalues}
above). Hence in general the smaller eigenvalues have higher multiplicities
in the chain with standardisation. The $P$-shuffles-without-standardisation
are diagonalisable (because the shuffle algebra is commutative), and
so are the top-to-random-with-standardisation and bottom-to-random-with-standardisation
(because of a connection with dual graded graphs, see the Remark in
\cite[Sec. 4.2]{descentoperatorchains}), but Sage computer calculations
show that the $P$-shuffles-with-standardisation are generally non-diagonalisable.

\subsection{II.E: Lumping Card-Shuffles by Descent Set\label{sub:descentsetlump}}

This section addresses an example separate from the long example of
the previous sections. The goal is to apply a mild adaptation of Theorem
\ref{thm:stronglumping-hopf} to reprove the following theorem of
Athanasiadis and Diaconis:
\begin{thm}
\label{thm:shuffledescentsetlump} \cite[Ex. 5.8]{hyperplanelump}
The $P$-shuffles (without standardisation) lump via descent set.
\end{thm}
A weaker version of this result, for riffle-shuffles only, was announced
in \cite{hpmccompositions}, along with eigenvectors of the lumped
chain on compositions. The proof below is reproduced from the thesis
\cite[Sec. 6.3]{mythesis}.

Recall from Example \ref{ex:shufflealg,t2r} that the $P$-shuffles
(without standardisation) come from the descent operators $m\circ\Delta_{P}$
on the shuffle algebra $\calsh$. Thus, to prove Theorem \ref{thm:shuffledescentsetlump},
it suffices to construct a quotient Hopf algebra of $\calsh$ such
that, for each word $w\in\calsh$ with distinct letters, the quotient
map $\theta$ sends $w$ to a basis element of the quotient algebra
indexed by $\Des(w)$. This quotient Hopf algebra is $QSym$, the
quasisymmetric functions of \cite{qsym}. (Although $QSym$ is a well-known
Hopf algebra, the quotient map $\theta$ is highly non-standard, in
contrast to the Hopf-morphisms of previous sections.) The images under
$\theta$ of the words with distinct letters will be the fundamental
basis $\{F_{I}\}$, but the proof below will also require the monomial
basis $\{M_{I}\}$. Since the product and coproduct of $QSym$ are
fairly complicated, we omit the details here, and refer the interested
reader to \cite{qsym}. 
\begin{thm}
\label{thm:shuffletoqsym} \cite[Th. 6.2.1]{mythesis} There is a
morphism of Hopf algebras $\theta:\calsh\rightarrow QSym$ such that,
if $w$ is a word with distinct letters, then $\theta(w)=F_{\Des(w)}$.\end{thm}
\begin{proof}
By \cite[Th. 4.1]{qsymisterminal}, $QSym$ is the terminal object
in the category of combinatorial Hopf algebras equipped with a multiplicative
character. So, to define any Hopf-morphism to $QSym$, it suffices
to define the corresponding character $\zeta$ on the domain. By \cite[Th. 6.1.i]{freeliealgs},
the shuffle algebra $\calsh$ is freely generated by \emph{Lyndon
words} \cite[Sec. 5.1]{lothaire}, which are strictly smaller than
all their cyclic rearrangements. Hence any choice of the values of
$\zeta$ on Lyndon words extends uniquely to a well-defined character
on $\calsh$. For Lyndon $u$, set 
\begin{equation}
\zeta(u)=\begin{cases}
1 & \mbox{if }u\mbox{ has all letters distinct and has no descents};\\
0 & \mbox{otherwise.}
\end{cases}\label{eq:defzeta}
\end{equation}
We claim that, consequently, (\ref{eq:defzeta}) holds for all words
with distinct letters, even if they are not Lyndon. Assuming this
for now, \cite[Th. 4.1]{qsymisterminal} defines 
\begin{align*}
\theta(w) & =\sum_{I\vdash n}(\zeta\otimes\dots\otimes\zeta)\left(\Delta_{I}(w)\right)M_{I}\\
 & =\sum_{I\vdash n}\zeta((w_{1},\dots,w_{i_{1}}))\zeta((w_{i_{1}+1},\dots,w_{i_{1}+i_{2}}))\dots\zeta((w_{i_{l(I)-1}+1},\dots,w_{n}))M_{I}.
\end{align*}
If $w$ has distinct letters, then every consecutive subword $(w_{i_{1}+\dots+i_{j}+1},\dots,w_{i_{1}\dots+i_{j+1}})$
of $w$ also has distinct letters, so 
\[
\zeta((w_{1},\dots,w_{i_{1}}))\dots\zeta((w_{i_{l(I)-1}+1},\dots,w_{n}))=\begin{cases}
1 & \mbox{if }\Des(w)\leq I;\\
0 & \mbox{otherwise.}
\end{cases}
\]
Hence $\theta(w)=\sum_{\Des(w)\leq I}M_{I}=F_{\Des(w)}$.

Now return to proving the claim that (\ref{eq:defzeta}) holds whenever
$w$ has distinct letters. Proceed by induction on $w$, with respect
to lexicographic order. \cite[Th. 6.1.ii]{freeliealgs}, applied to
a word $w$ with distinct letters, states that: if $w$ has Lyndon
factorisation $w=u_{1}\cdot\dots\cdot u_{k}$, then the product of
these factors in the shuffle algebra satisfies 
\[
u_{1}\shuffle\dots\shuffle u_{k}=w+\sum_{v<w}\alpha_{v}v
\]
 where $\alpha_{v}$ is 0 or 1. The character $\zeta$ is multiplicative,
so 
\begin{equation}
\zeta(u_{1})\dots\zeta(u_{k})=\zeta(w)+\sum_{v<w}\alpha_{v}\zeta(v).\label{eq:zeta}
\end{equation}
If $w$ is Lyndon, then the claim is true by definition; this includes
the base case for the induction. Otherwise, $k>1$ and there are two
possibilities:
\begin{itemize}
\item None of the $u_{i}$s have descents. Then the left hand side of (\ref{eq:zeta})
is 1. Since the $u_{i}$s together have all letters distinct, the
only way to shuffle them together and obtain a word with no descents
is to arrange the constituent letters in increasing order. This word
is Lyndon, so it is not $w$, and, by inductive hypothesis, it is
the only $v$ in the sum with $\zeta(v)=1$. So $\zeta(w)$ must be
0.
\item Some Lyndon factor $u_{i}$ has at least one descent. Then $\zeta(u_{i})=0$,
so the left hand side of (\ref{eq:zeta}) is 0. Also, no shuffle of
$u_{1},\dots,u_{k}$ has its letters in increasing order. Therefore,
by inductive hypothesis, all $v$ in the sum on the right hand side
have $\zeta(v)=0$. Hence $\zeta(w)=0$ also.
\end{itemize}
\end{proof}
\begin{rem*}
From the proof, one sees that the conclusion $\theta(w)=F_{\Des(w)}$
for $w$ with distinct letters relies only on the value of $\zeta$
on Lyndon words with distinct letters. The proof took $\zeta(u)=0$
for all Lyndon $u$ with repeated letters, but any other value would
also work. Alas, no definition of $\zeta$ will ensure that the images
of all words are $F_{I}$ for some $I$:
\[
\theta((1,1))=\frac{1}{2}\theta((1)\shuffle(1))=\frac{1}{2}\theta(1)\theta(1)=\frac{1}{2}M_{(1)}^{2}=F_{(1,1)}+F_{(2)}.
\]

\end{rem*}


\newcommand{\etalchar}[1]{$^{#1}$}

\end{document}